\newtheorem*{maintheorem}{Theorem}
\newtheorem{theorem}{Theorem}[section]
\newtheorem{lemma}[theorem]{Lemma}
\newtheorem{proposition}[theorem]{Proposition}
\newtheorem{corollary}[theorem]{Corollary}
\theoremstyle{definition}
\newtheorem{definition}[theorem]{Definition}
\newtheorem{example}[theorem]{Example}
\newtheorem*{notations}{Notations}
\newtheorem{remark}[theorem]{Remark}
\newtheorem{remarks}[theorem]{Remarks}
\providecommand{\rparen}{)}
\newcounter{subenv}[theorem]
\newenvironment{subdefinition}{\begin{list}
  {\textit{\alph{subenv}}\rparen}
  {\topsep=0pt
   \itemsep=0pt
   \parsep=0pt
   \leftmargin=1cm
   \usecounter{subenv}
   \def\makelabel##1{\hss\llap{##1}}}}{\end{list}}
\newenvironment{subremarks}{\begin{list}
  {\textit{\alph{subenv}}\rparen}
  {\topsep=0pt
   \itemsep=0pt
   \parsep=0pt
   \leftmargin=1cm
   \usecounter{subenv}
   \def\makelabel##1{\hss\llap{##1}}}}{\end{list}}
\numberwithin{equation}{section}
\newcommand*{\N}{\mathbb{N}}
\newcommand*{\Z}{\mathbb{Z}}
\newcommand*{\agfield}{\mathbf{\Omega}}
\newcommand*{\agfieldnonzero}{\agfield^*}
\newcommand*{\ffield}[1]{\mathbb{F}_{\!#1}}
\newcommand*{\Fq}{\ffield{q}}
\newcommand*{\Fqr}{\ffield{q^r}}
\newcommand*{\Fqnonzero}{\ffield{q}^*}
\newcommand*{\ZnZ}{\Z/n\Z}
\newcommand*{\ZnZinv}{(\Z/n\Z)^\times}
\newcommand*{\Sn}{\mathfrak{S}_n}
\newcommand*{\cargroup}[1]{\hat{#1}}
\newcommand*{\cargroupG}{\cargroup{G}}
\newcommand*{\Fqnonzerocar}{\widehat{\mathbb{F}}^*_{\!q}}
\newcommand*{\Hom}{\mathop{\mathrm{Hom}}\nolimits}
\newcommand*{\caradd}{\varphi}
\newcommand*{\carmult}{\chi}
\newcommand*{\carmultbis}{\eta}
\newcommand*{\carmulttriv}{\mathbf{1}}
\newcommand*{\gausssum}[1]{G(\caradd,#1)}
\newcommand*{\jacobisum}{J}
\newcommand*{\dividesnot}{\nmid}
\newcommand*{\card}[1]{\lvert#1\rvert}
\newcommand*{\parameter}{\psi}
\newcommand*{\dworkhypersurface}[1][\parameter]{X_{#1}}
\newcommand*{\mirrordworkhypersurface}{Y_{\parameter}}
\newcommand*{\hypergeomvar}[1][\lambda]{H_{#1}}
\newcommand*{\quintichypersurface}{\mathcal{M}_{\parameter}}
\newcommand*{\curveA}{\mathcal{A}_{\parameter}}
\newcommand*{\curveB}{\mathcal{B}_{\parameter}}
\newcommand*{\varaff}[1]{\mathbb{A}^{\!#1}}
\newcommand*{\varprojFq}[1]{\mathbb{P}^{#1}_{\!\Fq}}
\newcommand*{\varaffFq}[1]{\mathbb{A}^{\!#1}_{\Fq}}
\newcommand*{\funzeta}[2]{Z_{#1}(#2)}
\newcommand*{\nbzerosaffdworkhypersurface}[1][\parameter]{\nu_q(\dworkhypersurface[#1])}
\newcommand*{\nbzerosaffdworkhypersurfaceallnotzero}[1][\parameter]{\nu_q^*(\dworkhypersurface[#1])}
\newcommand*{\coeffnbzerosempty}{\beta}
\newcommand*{\coeffnbzeros}[3]{\coeffnbzerosempty_{(#1),#2,#3}}
\newcommand*{\thrm}[1]{Theorem~#1}
\newcommand*{\pg}[1]{page~#1}
\newcommand*{\sctn}[1]{\S#1}
\newcommand*{\subsctn}[1]{\S#1}
\newcommand*{\subsctns}[1]{\S\S#1}
\newcommand*{\eqtn}[1]{Eq.~(#1)}
\newcommand*{\dfntn}[1]{Definition~#1}
\newcommand*{\rmrk}[1]{Remark~#1}
\newcommand*{\frml}[1]{Formula~#1}
\newcommand*{\lmm}[1]{Lemma~#1}
\newcommand*{\crllr}[1]{Corollary~#1}
\newcommand*{\prpstn}[1]{Proposition~#1}
\newcommand*{\tensor}[1][]{\otimes_{#1}}
\newcommand*{\set}[1]{\{#1\}}
\newcommand*{\setst}[2]{\{#1\mid#2\}}
\newcommand*{\classZnZ}[1]{[#1]}
\newcommand*{\classSn}[1]{\langle#1\rangle}
\newcommand*{\classSnZnZinv}[1]{\mkern1mu\underline{\mkern-1mu#1\mkern-1mu}\mkern1mu}
\newcommand*{\multSn}[1]{\gamma_{#1}}
\newcommand*{\multZnZinv}[1]{K_{#1}}
\newcommand*{\SaQellbarprime}[1][s]{S_{#1}'}
\newcommand*{\SaQellbar}[1][s]{S_{#1}}
\newcommand*{\SaQell}[1][s]{S_{\mkern1mu\overline{\mkern-1mu#1\mkern-1mu}\mkern1mu}}
\newenvironment{system}{\begin{cases}}{\end{cases}}
\newcommand{\integerinterval}[2]{[\![#1;#2]\!]}
\begin{document}
\baselineskip=17pt

\title[Factorisation of the zeta functions of Dwork hypersurfaces]{An explicit factorisation of the zeta functions of Dwork hypersurfaces}

\author[P.~Goutet]{Philippe Goutet}

\address{Institut de Math\'ematiques de Jussieu\\
case 247\\
4 place Jussieu\\
75252 Paris Cedex 05, France}

\email{goutet@math.jussieu.fr}

\date{}

\begin{abstract}
Let $\Fq$ be a finite field with $q$ elements, $\parameter$ a non-zero element of $\Fq$, and $n$ an integer $\geq 3$ prime to $q$. The aim of this article is to show that the zeta function of the projective variety over $\Fq$ defined by $\dworkhypersurface \colon x_1^n+\dots+x_n^n - n \parameter x_1\dots x_n=0$ has, when $n$ is prime and $\dworkhypersurface$ is non singular (i.e. when $\parameter^n \neq 1$), an explicit decomposition in factors coming from affine varieties of odd dimension $\leq n-4$ which are of hypergeometric type. The method we use consists in counting separately the number of points of $\dworkhypersurface$ and of some varieties of the preceding type and then compare them. This article answers, at least when $n$ is prime, a question asked by D.~Wan in his article ``Mirror Symmetry for Zeta Functions''.
\end{abstract}

\subjclass[2000]{Primary 14G10; Secondary 11G25, 14G15}

\keywords{Zeta function factorisation, Dwork hypersurfaces, hypergeometric hypersurfaces}

\maketitle

\section{Introduction}

Let $n$ be an integer $\geq 3$ and $\Fq$ a finite field of characteristic $p \dividesnot n$. We consider the family of hypersurfaces of $\varprojFq{n-1}$ defined by
\[\dworkhypersurface \colon x_1^n+\dots+x_n^n - n \parameter x_1\dots x_n=0, \qquad \text{(Dwork family)}\]
where $\parameter \in \Fq$ is a non-zero parameter. We will make the assumption that $\dworkhypersurface$ is non-singular, i.e. that $\parameter^n \neq 1$. We denote by $\card{\dworkhypersurface(\Fqr)}$ the number of points of $\dworkhypersurface$ over an extension $\Fqr$ of degree $r$ of $\Fq$; the zeta function of $\dworkhypersurface$ is defined by
\[\funzeta{\dworkhypersurface/\Fq}{t} = \exp\biggl(\sum_{r=1}^{+\infty}{\card{\dworkhypersurface(\Fqr)}\mathinner{\frac{t^r}{r}}}\biggr).\]

When $q \equiv 1 \mod n$ (see \cite[\thrm{7.2} \pg{174}]{Wan.mirror}) and when $n$ is prime (see \cite[\thrm{9.5} \pg{179}]{Haessig.(n+1)-adic}), it is possible to show that the zeta function of $\dworkhypersurface$ takes the form
\[\funzeta{\dworkhypersurface/\Fq}{t} = \frac{\bigl(Q(t,\parameter)R(q^\rho t^\rho,\parameter)\bigr)^{(-1)^{n-1}}}{(1-t)(1-qt)\dots(1-q^{n-2}t)},\]
where $\rho$ is the order of $q$ in $\ZnZinv$.

In this formula, $Q(t,\parameter)$ is a polynomial with integer coefficients of degree $n-1$. As proved by D.~Wan (see \cite[\sctn{7}, \eqtn{14}, \pg{173}]{Wan.mirror}), this factor comes from the zeta function of the quotient $\mirrordworkhypersurface$ of $\dworkhypersurface \tensor \ffield{q^\rho}$ by the group $\setst{(\zeta_1,\dots,\zeta_n) \in \ffield{q^\rho}}{\zeta_i^n = 1\text{, }\zeta_1\dots\zeta_n=1}$ (Wan calls $\mirrordworkhypersurface$ a ``singular mirror'' of $\dworkhypersurface$):
\[\funzeta{\mirrordworkhypersurface/\Fq}{t} = \frac{Q(t,\parameter)^{(-1)^{n-1}}}{(1-t)(1-qt)\dots(1-q^{n-2}t)}.\]
A simple equation of $\mirrordworkhypersurface$ is $(y_1+\dots+y_n)^n = (n\parameter)^n y_1\dots y_n$.

The factor $R(t,\parameter)$ is a polynomial with integer coefficients of degree
\[\frac{(n-1)^n+(-1)^n(n-1)}{n} - (n-1)\]
whose roots have absolute values $q^{-(n-4)/2}$. We are interested in describing the factorisation of $R$; two approaches are possible: either predict, from a theoretical point of view, the existence of a factorisation of $R$, or look for explicit varieties with factors in their zeta functions appearing in $R$. Concerning the first approach, we refer to \cite{Kloosterman.zeta.monomial.fermat}. The second approach is raised by Wan in \cite[\sctn{7}, \pg{175}]{Wan.mirror} who mentions that it has been solved for $n=3$, $n=4$ (Dwork) and $n=5$ (Candelas, de la Ossa, and Rodriguez Villegas); a~recent article of Katz \cite{Katz.another.look.dwork.family} also talks about the subject from a different angle\footnote{His results are in terms of traces of the Frobenius of the toric hypersurfaces $x_1\dots x_n = \lambda y_1\dots y_m$ over a hypergeometric sheave.}.

The aim of this article is to handle the case where $n$ is a prime number $\geq 5$ by using only properties of Gauss sums; the fact that $n$ is prime allows to restrict to the case $q \equiv 1 \mod n$ in view of Haessig's result \cite[\thrm{9.5}, \pg{179}]{Haessig.(n+1)-adic} that, when $n$ is prime,
\[R(qt,\parameter) = R_{\dworkhypersurface/\ffield{q^\rho}}(q^\rho t^\rho,\parameter)^{1/\rho},\]
where $\rho$ is the order of $q$ in $\ZnZinv$. More precisely, if we define $N_R(q^r)$ by $R(t,\parameter) = \exp\bigl(\sum_{r=1}^{+\infty}{N_R(q^r)} \frac{t^r}{r}\bigr)$, we will show the following result (\thrm{\ref{result:conclusion}} \pg{\pageref{result:conclusion}}).

\begin{maintheorem}
Let $n$ be a prime number $\geq 5$ such that $q \equiv 1 \mod n$. We can write
\begin{equation}\label{formula:AIM}
N_R(q^r) = q^{\frac{n-5}{2}}N_1(q^r) + q^{\frac{n-7}{2}}N_3(q^r) + \dots + N_{n-4}(q^r),
\end{equation}
where each $N_d(q^r)$ is a sum of some $\card{H_{d,i}(q^r)} - (q-1)^{l-1}q^{d+1-l}$, the $H_{d,i}$ being varieties of $\varaffFq{d+2}$ of hypergeometric type of odd dimension equal to $d$ with $1 \leq d \leq n-4$ (their equations are explicitly given in \subsctn{\ref{subsection:nb.points.link:link}} \pg{\pageref{subsection:nb.points.link:link}}).
\end{maintheorem}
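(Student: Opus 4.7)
The plan is to compute the difference $\card{\dworkhypersurface(\Fqr)} - \card{\mirrordworkhypersurface(\Fqr)}$ by means of Gauss sums and to show that, after division by $q^r$, it decomposes exactly as claimed in \eqref{formula:AIM}. Reading off the factorisations of $\funzeta{\dworkhypersurface/\Fq}{t}$ and $\funzeta{\mirrordworkhypersurface/\Fq}{t}$ recalled above, and using that $n \geq 5$ is odd and that $\rho = 1$ since $q \equiv 1 \bmod n$, one obtains
\[
q^{r}\, N_R(q^{r}) \;=\; \card{\dworkhypersurface(\Fqr)} - \card{\mirrordworkhypersurface(\Fqr)},
\]
so the task reduces to producing the claimed decomposition for the right-hand side.

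I would expand each of these two point counts by the classical device of detecting the defining equation through an additive character $\caradd$ of $\Fqr$ and each monomial $x_i^n$ (respectively each $y_i$) through the multiplicative characters of $\Fqr^*$ of order dividing $n$, which are in abundance since $q \equiv 1 \bmod n$. The result is, in both cases, a sum of products of Gauss sums indexed by $n$-tuples $(\carmult_1,\ldots,\carmult_n)$ of characters of order dividing $n$; on $\mirrordworkhypersurface$ the diagonal scaling invariance of the equation $(y_1+\cdots+y_n)^n = (n\parameter)^n y_1\cdots y_n$ forces $\carmult_1=\cdots=\carmult_n$, so subtracting leaves only the non-diagonal tuples in $q^r\, N_R(q^r)$. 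These surviving tuples would then be organised into orbits under the natural $\Sn$-action permuting the indices, and inside each orbit the reflection formula $\gausssum{\carmult}\gausssum{\carmult^{-1}} = \carmult(-1)\,q^r$ should convert the Gauss sums attached to coinciding characters into powers of $q^r$, thereby producing precisely the weight shifts $q^{(n-5)/2}, q^{(n-7)/2}, \ldots, 1$ appearing in the statement.

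The affine hypergeometric varieties $H_{d,i}$ would then be read off from the surviving Gauss-sum packets: their equations in $\varaffFq{d+2}$ are to be chosen so that counting $\card{H_{d,i}(\Fqr)}$ by the same additive/multiplicative character technique reproduces the Gauss-sum packet attached to one $\Sn$-orbit, while the corrective polynomial $(q^r-1)^{l-1}(q^r)^{d+1-l}$ absorbs the contribution of those tuples in which some $\carmult_i$ becomes trivial on a subset of coordinates. The primality of $n$ intervenes at this point because it restricts the possible shapes of the $\Sn$-stabilisers of non-diagonal $n$-tuples of $n$-th order characters, and forces the dimension $d$ of the resulting $H_{d,i}$ to be odd with $1 \leq d \leq n-4$.

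The chief obstacle I anticipate is the combinatorial bookkeeping: one must show that every non-diagonal $\Sn$-orbit contributes exactly one packet of the form $\card{H_{d,i}(\Fqr)} - (q^r-1)^{l-1}(q^r)^{d+1-l}$ of the correct odd dimension, that the predicted power of $q^r$ falls out of the stabiliser structure, and that no orbit is double counted or omitted. A secondary but non-trivial point is the careful tracking of signs and of the powers of $q^r$ produced by the repeated use of the Gauss-sum reflection formula: it is precisely through this accounting that the exponents $(n-5)/2, (n-7)/2, \ldots, 0$ in front of the successive $N_d(q^r)$ must emerge cleanly from the character-theoretic side.
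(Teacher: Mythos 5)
Your outline follows the paper's strategy in broad terms---expand $\card{\dworkhypersurface(\Fqr)}$ in Gauss sums, strip off the diagonal (mirror) contribution, group the remaining character tuples into orbits, and match each orbit with a hypergeometric variety---but it omits the steps that carry the real content of the theorem. The most serious gap is the bound $d \leq n-4$ on the dimensions. After the reflection formula is applied, each orbit produces a quotient of $m$ Gauss sums by $m$ Gauss sums (the exponents $v_1,\dots,v_m$ being the residues absent from $(s_1,\dots,s_n)$ and $w_1,\dots,w_m$ the repeated ones counted with multiplicity), and the hypergeometric variety one reads off has dimension $2m-m'-3$, where $m'$ counts the indices that can be ``paired'' so that $w_{2i-1}-v_{2i-1}\equiv -(w_{2i}-v_{2i}) \bmod n$. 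Since $m$ can be as large as $n-3$, with too few pairings the construction yields varieties of dimension up to $2n-9$, which exceeds $n-4$ as soon as $n\geq 7$. The paper secures $m'\geq 2m-n+1$ (hence $d\leq n-4$) in \prpstn{\ref{result:nb.pairings}} only by invoking Alon's theorem on additive Latin transversals to permute the $w_i$ so that the differences $v_i-w_i$ are pairwise distinct, after which a counting argument extracts enough opposite pairs. Nothing in your proposal supplies this, and it does not follow from ``the stabiliser structure'' or from primality in any routine way.

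A second gap concerns the orbit and multiplicity structure. Grouping by $\Sn$-orbits alone is not enough: the paper quotients by the simultaneous actions of $\ZnZ$ (translation of the $s_i$, absorbed into the auxiliary character $\carmultbis$, which is what makes the parameter $1/\parameter^n$ appear in the second equation of $H_{d,i}$) and of $\ZnZinv$ (which reassembles the sum over all nontrivial $\carmult$ with $\carmult^n=\carmulttriv$; without it one obtains only a single character slice rather than the full point count $\card{H_{d,i}}$). The resulting multiplicity is $\multSn{s}/\multZnZinv{s}$, whose integrality is a separate lemma (\lmm{\ref{result:K.divides.gamma}}) again relying on the primality of $n$. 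Finally, the class of $(0,1,\dots,n-1)$ fits neither the mirror term nor the hypergeometric pattern and must be isolated and shown to vanish when $\parameter^n\neq 1$ (\lmm{\ref{resultat:nb.zeros.terme.sing}}). Your reduction $q^r N_R(q^r)=\card{\dworkhypersurface(\Fqr)}-\card{\mirrordworkhypersurface(\Fqr)}$ and the general character-sum mechanism are sound, but without these ingredients the decomposition \eqref{formula:AIM} is not established.
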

This equality in terms of number of points translates into a factorisation of the polynomial $R$ in terms of the zeta function of the preceding $H_{d,i}(q^r)$.

This article is organised as follows. In \sctn{\ref{section:gauss.sums}}, we recall the formulas concerning Gauss and Jacobi sums we will need in the rest of the article. In \sctn{\ref{section:nb.points.hypergeom}}, we compute, in terms of Gauss sums, the number of points of some varieties of hypergeometric type thanks to a method similar to the one Koblitz used in \cite{Koblitz.nb.points}. In \sctn{\ref{section:nb.points.dwork}}, we recall the formula for the number of points of $\dworkhypersurface$, and in \sctn{\ref{section:nb.points.link}}, we compare this formula with those from \sctn{\ref{section:nb.points.hypergeom}}. Finally, in \sctn{\ref{section:nb.points.examples}}, we detail the cases $n=5$ (already treated by Candelas, de la Ossa, and Rodriguez Villegas in \cite{CdlORV.II}) and $n=7$. The assumptions that $n$ is prime and that $q \equiv 1 \mod n$ will only be used starting from \sctn{\ref{section:nb.points.link}} and \subsctn{\ref{subsection:nb.points.dwork:formulas}} respectively.

Let us mention to finish that our method does not give a geometric link between $\dworkhypersurface$ and the varieties of hypergeometric type we consider.

\section{Gauss and Jacobi sums formulas}\label{section:gauss.sums}

In all this \sctn{\ref{section:gauss.sums}}, $\Fq$ will be a finite field with $q$ elements.

Let $\agfield$ be an algebraically closed field of characteristic zero, $G$ a finite abelian group and $\cargroupG = \Hom(G,\agfieldnonzero)$ its character group. Let us recall the following orthogonality formula:
\begin{equation}\label{formula:orthogonality:gen}
\frac{1}{\card{G}}\sum_{\caradd \in \cargroup{G}}{\caradd(g)} = \begin{cases} 1 & \text{if $g = e$,} \\ 0 & \text{if $g \neq e$,} \end{cases}
\end{equation}
where $e$ is the neutral element of $G$. In the following, we will use this formula when $G = \Fq$ or $G = \Fqnonzero$.\bigskip

Let us now fix a non-trivial additive character $\caradd \colon \Fq \to \agfieldnonzero$.

\begin{proposition}[Orthogonality formula]
\begin{equation}\label{formula:orthogonality:car.add}
\frac{1}{q}\sum_{a \in \Fq}{\caradd(ax)} = \begin{cases} 1 & \text{if $x = 0$,} \\ 0 & \text{if $x \neq 0$.} \end{cases}
\end{equation}
\end{proposition}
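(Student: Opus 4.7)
The plan is to split into the two cases $x = 0$ and $x \neq 0$ and evaluate the sum in each. When $x = 0$, every term of the sum equals $\caradd(0)$; since $\caradd$ is a group homomorphism from $(\Fq,+)$ into $\agfieldnonzero$, one has $\caradd(0) = 1$, so the sum equals $q$ and dividing by $q$ yields $1$, matching the top branch of the case split.

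When $x \neq 0$, the key step is to recognise $a \mapsto \caradd(ax)$ as itself a character of the additive group $\Fq$. Indeed, multiplication by a non-zero $x$ is a group automorphism of $(\Fq,+)$, so its composition with $\caradd$ lies in $\Hom(\Fq,\agfieldnonzero)$, and it is non-trivial because $\caradd$ is non-trivial and multiplication by $x$ is a bijection. It therefore suffices to show that summing a non-trivial additive character of $\Fq$ over all of $\Fq$ returns zero.

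I would establish this directly by a translation trick rather than by invoking the general formula. Choose $a_0 \in \Fq$ with $\caradd(a_0 x) \neq 1$, which exists by non-triviality. Using the bijection $a \mapsto a_0 + a$ of $\Fq$ together with the multiplicative property $\caradd((a_0 + a)x) = \caradd(a_0 x)\caradd(ax)$, one gets
\[\caradd(a_0 x) \sum_{a \in \Fq} \caradd(ax) = \sum_{a \in \Fq} \caradd((a_0 + a)x) = \sum_{a \in \Fq} \caradd(ax),\]
so that $(\caradd(a_0 x) - 1)\sum_{a \in \Fq} \caradd(ax) = 0$ and the sum must vanish. An alternative route would be to apply \eqref{formula:orthogonality:gen} after using Pontryagin duality $\widehat{\widehat{\Fq}} \simeq \Fq$ to swap the roles of group elements and characters, but the direct translation argument is shorter and keeps the proof self-contained.

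The main obstacle is essentially nil here, as this is a classical orthogonality statement; the only point requiring a moment's care is confirming that $a \mapsto \caradd(ax)$ remains non-trivial when $x \neq 0$, which is immediate from the bijectivity of multiplication by $x$ and the hypothesis that $\caradd$ is non-trivial.
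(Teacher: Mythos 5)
Your proof is correct, but it proceeds along a genuinely different (if equally classical) route from the paper's. The paper observes that, as $a$ ranges over $\Fq$, the maps $x \mapsto \caradd(ax)$ exhaust the character group $\widehat{\Fq}$, so the sum $\sum_{a \in \Fq}\caradd(ax)$ is just the sum of \emph{all} additive characters evaluated at the fixed element $x$; it then applies the general orthogonality formula \eqref{formula:orthogonality:gen} with $G = \Fq$ and $g = x$. You instead fix $x$ and view $a \mapsto \caradd(ax)$ as a \emph{single} non-trivial character summed over the whole group, which you kill with the translation trick $(\caradd(a_0x)-1)\sum_a \caradd(ax) = 0$. This is the dual orthogonality relation, proved from scratch. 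What your version buys is self-containedness: you never need the fact that $a \mapsto \caradd(a\,\cdot)$ is a bijection of $\Fq$ onto $\widehat{\Fq}$ (only that multiplication by $x \neq 0$ preserves non-triviality), whereas the paper's one-line deduction leans on that identification and on \eqref{formula:orthogonality:gen} already being available. The paper's version is shorter in context; yours would survive even if \eqref{formula:orthogonality:gen} had not been stated. Both are complete and correct.
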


\begin{proof}
This results from \frml{\eqref{formula:orthogonality:gen}} above and the fact that every additive character is of the form $x \mapsto \caradd(ax)$ for some $a \in \Fq$.
\end{proof}

\begin{definition}[Gauss sums]
If $\carmult \colon \Fqnonzero \to \agfieldnonzero$ is a multiplicative character, let $\gausssum{\carmult}$ be the Gauss sum
\[\gausssum{\carmult} = \sum_{x \in \Fqnonzero}{\caradd(x)\carmult(x)}.\]
If $\carmulttriv$ is the trivial character of $\Fqnonzero$, we have $\gausssum{\carmulttriv} = -1$.
\end{definition}

\begin{proposition}[Reflection formula]
If $\carmult$ is a non-trivial character of $\Fqnonzero$,
\begin{equation}\label{formula:reflection}
\gausssum{\carmult} \gausssum{\carmult^{-1}} = \carmult(-1) q.
\end{equation}
\end{proposition}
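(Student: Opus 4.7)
The plan is to compute the product $\gausssum{\carmult}\gausssum{\carmult^{-1}}$ directly from the definition and reduce it, by a change of variables, to a sum involving only an additive and a multiplicative character, which can then be collapsed using orthogonality.

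First I would write
\[\gausssum{\carmult}\gausssum{\carmult^{-1}} = \sum_{x \in \Fqnonzero}\sum_{y \in \Fqnonzero} \caradd(x+y)\carmult(x)\carmult^{-1}(y).\]
Then, using that $\Fqnonzero$ is a group, I would perform the substitution $y = xt$ with $t \in \Fqnonzero$. The factors $\carmult(x)$ and $\carmult^{-1}(x)$ cancel, and one obtains
\[\gausssum{\carmult}\gausssum{\carmult^{-1}} = \sum_{t \in \Fqnonzero} \carmult^{-1}(t) \sum_{x \in \Fqnonzero} \caradd\bigl(x(1+t)\bigr).\]

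Next I would split the outer sum according to whether $t = -1$ or $t \neq -1$. If $t = -1$, the inner sum is $\sum_{x \in \Fqnonzero} \caradd(0) = q-1$, contributing $(q-1)\carmult^{-1}(-1)$. If $t \neq -1$, then $1+t \in \Fqnonzero$ and as $x$ runs over $\Fqnonzero$ so does $x(1+t)$, so the inner sum equals $\sum_{u \in \Fqnonzero} \caradd(u) = -1$ by the orthogonality formula \eqref{formula:orthogonality:car.add} applied to $x = 1$ (which gives that the sum over all of $\Fq$ vanishes, and then removing the $u=0$ term).

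Putting this together, the product equals
\[(q-1)\carmult^{-1}(-1) - \sum_{\substack{t \in \Fqnonzero \\ t \neq -1}} \carmult^{-1}(t).\]
Because $\carmult^{-1}$ is non-trivial, \eqref{formula:orthogonality:gen} applied to $G = \Fqnonzero$ gives $\sum_{t \in \Fqnonzero} \carmult^{-1}(t) = 0$, so the remaining sum equals $-\carmult^{-1}(-1)$. The whole expression therefore collapses to $q\,\carmult^{-1}(-1)$, and since $\carmult(-1)^2 = \carmult(1) = 1$ forces $\carmult^{-1}(-1) = \carmult(-1)$, we obtain the claimed identity. There is no real obstacle here; the only subtlety to watch is the separation of the degenerate case $t = -1$, where the additive sum fails to vanish and provides the main contribution $q\,\carmult(-1)$.
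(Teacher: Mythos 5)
Your proof is correct and follows essentially the same route as the paper: expand the product, substitute to collapse the multiplicative characters into a single $\carmult^{-1}(t)$ (the paper uses $x=yz$, you use $y=xt$, which is the same computation up to relabeling), isolate the $t=-1$ term, and finish with the additive and multiplicative orthogonality relations. You merely spell out the final orthogonality step that the paper leaves as a one-line remark.
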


\begin{proof}
Let us recall the proof of this simple property (see also \cite[\thrm{1.1.4~(a)}, \pg{10}]{BEW.gauss.jacobi.sums}). We have
\[\gausssum{\carmult} \gausssum{\carmult^{-1}} = \sum_{x,y \in \Fqnonzero}{\caradd(x+y)\carmult(\tfrac{x}{y})}.\]
Making the change of variable $x = yz$, we obtain
\begin{align*}
\gausssum{\carmult} \gausssum{\carmult^{-1}}
& = \sum_{y,z \in \Fqnonzero}{\caradd(y(1+z))\carmult(z)} \\
& = \carmult(-1)(q-1) + \sum_{\substack{z \in \Fqnonzero\text{, }z \neq -1}}{\biggl(\sum_{y \in \Fqnonzero}{\caradd(y(1+z))}\biggr)\carmult(z)}.
\end{align*}
We conclude by making the change of variable $y' = y(1+z)$ and by using an orthogonality formula.
\end{proof}

\begin{proposition}[Multiplication formula]
Let $d \geq 1$ be an integer dividing $q-1$. If $\carmultbis$ is a character of $\Fq^*$,
\begin{equation}\label{formula:multiplication}
\frac{\gausssum{\carmultbis^d}}{\prod_{\carmult^d = \carmulttriv}{\gausssum{\carmultbis\carmult}}} = \frac{\carmultbis(d)^d}{\prod_{\substack{\carmult^d = \carmulttriv \\ \carmult \neq \carmulttriv}}{\gausssum{\carmult}}}.
\end{equation}
\end{proposition}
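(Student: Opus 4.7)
The identity is the Hasse-Davenport product relation (multiplication formula) for Gauss sums. My plan is to clear denominators and prove the equivalent identity
\begin{equation*}
\carmultbis(d)^d \prod_{\carmult^d = \carmulttriv} \gausssum{\carmultbis\carmult} = \gausssum{\carmultbis^d} \prod_{\substack{\carmult^d = \carmulttriv \\ \carmult \neq \carmulttriv}} \gausssum{\carmult}
\end{equation*}
by manipulating the character sum on the left combinatorially until the right-hand side appears.

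The first step is to expand the product as an iterated sum over tuples $(x_\carmult)_{\carmult \in H}$, where $H = \{\carmult : \carmult^d = \carmulttriv\}$ is the subgroup of $\Fqnonzerocar$ of order $d$ (using $d \mid q-1$):
\begin{equation*}
\prod_{\carmult \in H} \gausssum{\carmultbis\carmult} = \sum_{(x_\carmult)} \caradd\Bigl(\sum_\carmult x_\carmult\Bigr) \carmultbis\Bigl(\prod_\carmult x_\carmult\Bigr) \prod_\carmult \carmult(x_\carmult).
\end{equation*}
Then I would substitute $x_\carmult = s\, y_\carmult$, where $s = \sum_\carmult x_\carmult$ (setting aside the stratum $s = 0$ for separate treatment) and the new variables satisfy $\sum_\carmult y_\carmult = 1$. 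Under this substitution, $\carmultbis(\prod x_\carmult) = \carmultbis(s)^d \carmultbis(\prod y_\carmult)$ factors out the $d$-th power naturally, and summing over $s \in \Fqnonzero$ produces the factor $\gausssum{\carmultbis^d}$, provided that $\prod_{\carmult \in H} \carmult$ is trivial --- which holds for $d$ odd, the case ultimately relevant to the main theorem since $n$ is prime $\geq 5$.

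The main obstacle is the identification of the resulting character sum on the hyperplane $\sum y_\carmult = 1$ with the quantity $\carmultbis(d)^{-d} \prod_{\carmult \neq \carmulttriv} \gausssum{\carmult}$. A naive application of the Jacobi sum decomposition merely reproduces the starting identity, so a different route is needed: I would perform a second substitution on the hyperplane, rescaling all coordinates by the factor $d$ (which is invertible in $\Fqnonzero$ since $d \mid q-1$), thereby extracting the coefficient $\carmultbis(d)^d$, and then evaluate the remaining character sum via the additive orthogonality formula~\eqref{formula:orthogonality:car.add}. The omitted stratum $s = 0$ would be shown by direct calculation to contribute zero (or to cancel with a corresponding boundary term), which is where the reflection formula~\eqref{formula:reflection} gets used. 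Finally, a separate case analysis handles even $d$, where $\prod_{\carmult \in H} \carmult$ is the quadratic character of $H$ rather than trivial; but since this case is not required for the main theorem, it need only be sketched.
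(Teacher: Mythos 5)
There is a genuine gap, and it sits exactly where you locate the ``main obstacle.'' After your substitution $x_\carmult = s\,y_\carmult$, the whole content of the proposition is concentrated in the evaluation of the Jacobi sum $\jacobisum\bigl((\carmultbis\carmult)_{\carmult^d=\carmulttriv}\bigr)=\sum_{\sum y_\carmult=1}\prod_{\carmult}(\carmultbis\carmult)(y_\carmult)$ as $\carmultbis(d)^{-d}\prod_{\carmult\neq\carmulttriv}\gausssum{\carmult}$ (up to the power of $q$), and neither of the two tools you propose can produce it. Rescaling the coordinates by $d$ is a net no-op: setting $y_\carmult = d^{-1}z_\carmult$ moves the sum to the hyperplane $\sum z_\carmult = d$ and extracts $\carmultbis(d)^{-d}$, but the sum over $\sum z_\carmult = c$ equals $\carmultbis(c)^{d}$ times the sum over $\sum z_\carmult = 1$ (for $d$ odd), so the extracted factor is cancelled by the change of hyperplane and you return to where you started. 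Applying additive orthogonality~\eqref{formula:orthogonality:car.add} to the constraint $\sum y_\carmult = 1$ just converts the Jacobi sum back into $\frac{1}{q}\prod_{\carmult}\gausssum{\carmultbis\carmult}/\gausssum{\carmultbis^d}$ via \eqtn{\ref{formula:jacobi.sums}}, which is the circularity you already identified in the ``naive'' route; the second substitution does not escape it. The residual claim --- that this particular Jacobi sum, of absolute value $q^{(d-1)/2}$, is the \emph{specific} algebraic number $\carmultbis(d)^{-d}\prod_{\carmult\neq\carmulttriv}\gausssum{\carmult}$ rather than some other number of the same absolute value --- is precisely the Hasse--Davenport product relation, and pinning down that root-of-unity ambiguity is known to require non-elementary input (Stickelberger's congruence, the Gross--Koblitz formula, or the Davenport--Hasse lifting relation together with prime-ideal factorisation of Gauss sums). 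The minor points (the $s=0$ stratum, which vanishes unless $\carmultbis^d=\carmulttriv$, and the even-$d$ case) are fine as sketched, but they do not touch the real difficulty.

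For comparison: the paper does not prove this proposition at all. It explicitly remarks that the formula ``does not seem to admit an elementary proof'' and cites \thrm{11.3.5} of Berndt--Evans--Williams, whose proof runs through substantially heavier machinery. So your instinct to look for a self-contained combinatorial argument is understandable but, as far as is known, cannot succeed with only the orthogonality, reflection, and Jacobi-sum formulas available in \sctn{\ref{section:gauss.sums}}; the honest options are to cite the reference as the paper does, or to import one of the genuinely deeper proofs.
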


\begin{proof}
This seemingly simple formula does not seem to admit an elementary proof; we refer the reader to \cite[\thrm{11.3.5} \pg{355}]{BEW.gauss.jacobi.sums} for additional details.
\end{proof}

\begin{definition}[Jacobi sums]\label{definition:jacobi.sums}
If $(\carmult_1,\dots,\carmult_r)$ is a finite sequence of characters of $\Fqnonzero$, we define
\[\jacobisum(\carmult_1,\dots,\carmult_r) = \sum_{\substack{x_1,\dots,x_r\in\Fqnonzero\\x_1+\dots+x_r=1}} {\carmult_1(x_1) \dots \carmult_r(x_r)}.\]
\end{definition}

\begin{proposition}[Link with Gauss sums]
If $\carmult_1$, \dots, $\carmult_r$ are characters of $\Fqnonzero$ not all trivial,
\begin{equation}\label{formula:jacobi.sums}
\jacobisum(\carmult_1,\dots,\carmult_r) = \begin{cases} \displaystyle \frac{1}{q}\frac{\gausssum{\carmult_1} \dots \gausssum{\carmult_r}}{\gausssum{\carmult_1\dots \carmult_r}} & \text{if $\carmult_1\dots\carmult_r = \carmulttriv$,}\\[9pt]
\displaystyle \phantom{\frac{1}{q}}\frac{\gausssum{\carmult_1} \dots \gausssum{\carmult_r}}{\gausssum{\carmult_1\dots \carmult_r}} & \text{if $\carmult_1\dots\carmult_r \neq \carmulttriv$.}
\end{cases}
\end{equation}
\end{proposition}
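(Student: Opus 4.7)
The plan is to expand the product of Gauss sums and group terms by the value of $s = x_1+\dots+x_r$, reducing everything to sums over the hyperplane $s=0$ and its complement, and then exploit the scaling invariance of Jacobi sums.

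First I would write
\[
\gausssum{\carmult_1}\cdots\gausssum{\carmult_r} = \sum_{x_1,\dots,x_r\in\Fqnonzero} \caradd(x_1+\dots+x_r)\,\carmult_1(x_1)\cdots\carmult_r(x_r) = \sum_{s\in\Fq} \caradd(s)\,N(s),
\]
where $N(s)=\sum_{x_1+\dots+x_r=s,\ x_i\neq 0}\carmult_1(x_1)\cdots\carmult_r(x_r)$. For $s\neq 0$, I would substitute $x_i = sy_i$, giving $N(s) = (\carmult_1\cdots\carmult_r)(s)\,\jacobisum(\carmult_1,\dots,\carmult_r)$ from \dfntn{\ref{definition:jacobi.sums}}.

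Second, to pin down $N(0)$, I would compute $\sum_{s\in\Fq}N(s)$ in two ways. On one hand, swapping the order of summation gives $\sum_s N(s)=\prod_{i=1}^r \sum_{x_i\in\Fqnonzero}\carmult_i(x_i)$, which vanishes because at least one $\carmult_i$ is non-trivial (here \frml{\eqref{formula:orthogonality:gen}} applied to $\Fqnonzero$ kills the corresponding factor). On the other hand, the $s\neq0$ contribution is explicit from step~1. Splitting cases: if $\carmult_1\cdots\carmult_r\neq\carmulttriv$, orthogonality forces $\sum_{s\neq0}N(s)=0$, hence $N(0)=0$; if $\carmult_1\cdots\carmult_r=\carmulttriv$, then $\sum_{s\neq 0}N(s)=(q-1)\jacobisum$, hence $N(0)=-(q-1)\jacobisum$.

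Finally I would plug these values of $N(0)$ back into the $\sum_s \caradd(s)N(s)$ expression. In the non-trivial-product case I get $\gausssum{\carmult_1}\cdots\gausssum{\carmult_r}=\jacobisum\sum_{s\neq0}\caradd(s)(\carmult_1\cdots\carmult_r)(s)=\gausssum{\carmult_1\cdots\carmult_r}\,\jacobisum$, which rearranges to the second line of \frml{\eqref{formula:jacobi.sums}}. In the trivial-product case, using $\caradd(0)=1$, $\sum_{s\neq0}\caradd(s)=-1$ (a consequence of \frml{\eqref{formula:orthogonality:car.add}}), and $\gausssum{\carmulttriv}=-1$, I obtain $\gausssum{\carmult_1}\cdots\gausssum{\carmult_r}=-(q-1)\jacobisum-\jacobisum=-q\jacobisum=q\,\gausssum{\carmult_1\cdots\carmult_r}\,\jacobisum$, which rearranges to the first line.

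I do not anticipate a real obstacle here, since all the ingredients (orthogonality, the substitution $x_i=sy_i$, the scaling trick, and $\gausssum{\carmulttriv}=-1$) are already in place in the excerpt; the only mildly delicate point is ensuring the two cases are handled symmetrically, which the double counting of $\sum_s N(s)$ does in a single stroke.
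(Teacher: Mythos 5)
Your argument is correct and is essentially identical to the paper's proof: your $N(s)$ is exactly the additive convolution $(\carmult_1*\dots*\carmult_r)(s)$ the paper introduces, the value at $s=0$ is pinned down by the same double counting of $\sum_s N(s)$, and the final substitution into $\sum_s\caradd(s)N(s)$ matches the paper's computation using $\gausssum{\carmulttriv}=-1$. No gaps.
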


\begin{proof}
Let us briefly recall the proof (see also \cite[\thrm{10.3.1}, \pg{302}]{BEW.gauss.jacobi.sums}). The additive convolution of the functions $\carmult_1$, \dots, $\carmult_r$ is defined by
\[(\carmult_1 * \dots * \carmult_r)(a) = \sum_{\substack{x_1+\dots+x_r=a\\x_i \in \Fqnonzero}}{\carmult_1(x_1) \dots \carmult_r(x_r)}.\]
It is equal to $(\carmult_1\dots\carmult_r)(a) J(\carmult_1\dots\carmult_r)$ when $a \neq 0$. To compute the value when $a = 0$, we notice that the sum of $(\carmult_1 * \dots * \carmult_r)(a)$ over $a \in \Fq$ is $0$ since at least one of the $\carmult_i$ is non trivial. Thus, $(\carmult_1*\dots*\carmult_r)(0)$ is $0$ if $\carmult_1\dots\carmult_r \neq \carmulttriv$ and is $-(q-1)\jacobisum(\carmult_1,\dots,\carmult_r)$ if $\carmult_1\dots\carmult_r = \carmulttriv$. Moreover,
\[\prod_{i=1}^{r}{\gausssum{\carmult_i}} = \sum_{a\in\Fq}{\caradd(a)(\carmult_1*\dots*\carmult_r)(a)},\]
and so
\[\prod_{i=1}^{r}{\gausssum{\carmult_i}} = \jacobisum(\carmult_1,\dots,\carmult_r) \times \begin{cases} \gausssum{\carmult_1\dots\carmult_n} & \text{if $\carmult_1\dots\carmult_r \neq \carmulttriv$,} \\ \gausssum{\carmulttriv}-(q-1) & \text{if $\carmult_1\dots\carmult_r = \carmulttriv$,} \end{cases}\]
which shows the result.
\end{proof}

\begin{proposition}[Fourier inversion formula]
For every map $f \colon \Fq^* \to \agfield$,
\begin{equation}\label{formula:Fourier.inversion}
\forall x \in \Fq^*, \quad f(x) = \frac{1}{q-1}\sum_{\carmultbis \in \Fqnonzerocar}{\biggl(\sum_{y \in \Fq^*\vphantom{\carmultbis \in \Fqnonzerocar}}{f(y)\carmultbis^{-1}(y)}\biggr) \carmultbis(x)}.
\end{equation}
\end{proposition}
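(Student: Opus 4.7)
The plan is to prove the Fourier inversion formula by swapping the two summations on the right-hand side and then applying the general orthogonality formula \eqref{formula:orthogonality:gen} to the group $G = \Fqnonzero$, whose character group has order $\card{\Fqnonzerocar} = q-1$.

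Concretely, I would start by writing the right-hand side of \eqref{formula:Fourier.inversion} as
\[\frac{1}{q-1}\sum_{\carmultbis \in \Fqnonzerocar}\sum_{y \in \Fqnonzero}{f(y)\carmultbis^{-1}(y)\carmultbis(x)},\]
then exchange the order of summation (the sums are finite, so this is unproblematic) to obtain
\[\frac{1}{q-1}\sum_{y \in \Fqnonzero}f(y)\sum_{\carmultbis \in \Fqnonzerocar}{\carmultbis(xy^{-1})}.\]
Here I have used the multiplicativity of $\carmultbis$ to combine $\carmultbis^{-1}(y)\carmultbis(x) = \carmultbis(xy^{-1})$; this is where the hypothesis $x,y \in \Fqnonzero$ is needed, so that $xy^{-1}$ makes sense in $\Fqnonzero$.

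The key step is then to recognise the inner sum as the orthogonality sum \eqref{formula:orthogonality:gen} applied to $G = \Fqnonzero$ at the element $g = xy^{-1}$. By that formula, $\sum_{\carmultbis}\carmultbis(xy^{-1})$ equals $q-1$ when $xy^{-1} = 1$, i.e.\ when $y = x$, and vanishes otherwise. Only the term $y = x$ survives, giving $\frac{1}{q-1} \cdot f(x) \cdot (q-1) = f(x)$, as required. No real obstacle is expected — the whole argument is a direct consequence of \eqref{formula:orthogonality:gen}, and the only care needed is to justify the interchange of sums and the use of the multiplicative identity $\carmultbis^{-1}(y)\carmultbis(x) = \carmultbis(xy^{-1})$ on $\Fqnonzero$.
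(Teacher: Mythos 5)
Your proof is correct and matches the paper's approach: the paper simply remarks that the formula is a direct consequence of the orthogonality relations for characters of $\Fq^*$, which is exactly the argument you spell out (swap the finite sums and apply \eqref{formula:orthogonality:gen} to $G = \Fqnonzero$ at $g = xy^{-1}$).
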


\begin{proof}
It is a direct consequence of the orthogonality formulas for the characters of the abelian group $\Fq^*$.
\end{proof}

\begin{corollary}
If $x \in \Fqnonzero$,
\begin{equation}\label{formula:caradd=gauss.sums}
\caradd(x) = \frac{1}{q-1}\sum_{\carmultbis \in \Fqnonzerocar}{\gausssum{\carmultbis^{-1}}\carmultbis(x)}.
\end{equation}
\end{corollary}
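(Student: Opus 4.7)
The plan is to deduce this immediately from the Fourier inversion formula \eqref{formula:Fourier.inversion} applied to the function $f \colon \Fq^* \to \agfield$ defined by $f(y) = \caradd(y)$ (that is, the restriction of the fixed non-trivial additive character $\caradd$ to the multiplicative group $\Fq^*$). Since $\caradd$ takes values in $\agfieldnonzero \subset \agfield$, this $f$ is an admissible input for \eqref{formula:Fourier.inversion}.

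Substituting $f(y) = \caradd(y)$ into \eqref{formula:Fourier.inversion}, the inner sum becomes
\[
\sum_{y \in \Fq^*} \caradd(y)\, \carmultbis^{-1}(y),
\]
which is exactly the defining expression of the Gauss sum $\gausssum{\carmultbis^{-1}} = \sum_{y \in \Fq^*} \caradd(y)\, \carmultbis^{-1}(y)$. Hence for every $x \in \Fq^*$,
\[
\caradd(x) = \frac{1}{q-1} \sum_{\carmultbis \in \Fqnonzerocar} \gausssum{\carmultbis^{-1}}\, \carmultbis(x),
\]
which is the claimed identity.

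There is no real obstacle here: the corollary is a one-line specialisation of \eqref{formula:Fourier.inversion}, together with the observation that the coefficient obtained by Fourier inversion coincides with the Gauss sum as it is defined in the paper (Definition of Gauss sums). The only minor point worth flagging is that the formula is stated for $x \in \Fqnonzero$, which matches the domain on which Fourier inversion on $\Fq^*$ applies; no extension to $x=0$ is needed.
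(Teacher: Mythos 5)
Your proof is correct and is exactly the intended derivation: the paper states this as an immediate corollary of the Fourier inversion formula~\eqref{formula:Fourier.inversion}, applied to $f=\caradd$ restricted to $\Fqnonzero$, with the inner sum recognised as the Gauss sum $\gausssum{\carmultbis^{-1}}$ by its definition. Nothing is missing.
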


\section{Number of points of some varieties of hypergeometric type}\label{section:nb.points.hypergeom}

In all of \sctn{\ref{section:nb.points.hypergeom}}, $n$ will be an integer $\geq 2$ and $\Fq$ a finite field with $q$ elements.

\subsection{Computation of the number of points}\label{subsection:nb.points.hypergeom:formulas}

We consider here some affine varieties of hypergeometric type for which we compute the number of points by using Gauss sums and taking inspiration from Koblitz \cite[\sctn{5}]{Koblitz.nb.points}.

\begin{theorem}\label{result:nb.points.hyper.var}
Let $k \geq l \geq 2$ be two integers and $\lambda \in \Fqnonzero$ a parameter; we denote by $\hypergeomvar \subset \varaff{k+1}$ the affine variety defined by
\[\begin{cases} y^n = x_1^{\alpha_1} \dots x_k^{\alpha_k} (1-x_1)^{\beta_1} \dots (1-x_{l-1})^{\beta_{l-1}} (1-x_l-\dots-x_k)^{\beta_l} \\ \lambda x_1 \dots x_l = 1\end{cases}\]
where $\alpha_i$ and $\beta_i$ are integers $\geq 1$. The number of points of $\hypergeomvar$ over $\Fq$ is
\[\card{\hypergeomvar(\Fq)} = (q-1)^{l-1} q^{k-l} + \sum_{\substack{\carmult^n=\carmulttriv\\\carmult\neq\carmulttriv}}{\frac{1}{q-1} \sum_{\carmultbis}{\mathinner{N_{\lambda,\carmult,\carmultbis}} \carmultbis(\lambda)}},\]
where
\begingroup\footnotesize\[N_{\lambda,\carmult,\carmultbis} = \frac{1}{q^\nu} \frac{\gausssum{\carmult^{\alpha_1}\carmultbis} \dots \gausssum{\carmult^{\alpha_l}\carmultbis} \, \gausssum{\carmult^{\beta_1}} \dots \gausssum{\carmult^{\beta_l}} \gausssum{\carmult^{\alpha_{l+1}}} \dots \gausssum{\carmult^{\alpha_{k}}}} {\gausssum{\carmult^{\alpha_1+\beta_1}\carmultbis} \dots \gausssum{\carmult^{\alpha_{l-1}+\beta_{l-1}}\carmultbis} \gausssum{\carmult^{\alpha_l+\dots+\alpha_{k}+\beta_l}\carmultbis}},\]\endgroup
with $\nu$ denoting the number of trivial characters among those appearing in the denominator (namely, $\carmult^{\alpha_j+\beta_j}\carmultbis$ for $1 \leq j \leq l-1$ and $\carmult^{\alpha_l+\dots+\alpha_{k}+\beta_l}\carmultbis$).
\end{theorem}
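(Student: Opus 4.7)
The plan is to count the solutions to the defining equations by first summing over $y$ for each fixed $x$, then removing the constraint $\lambda x_1 \cdots x_l = 1$ via Fourier inversion, and finally recognising the resulting character sums as Jacobi sums which can be converted into the stated Gauss sum expression by formula \eqref{formula:jacobi.sums}. Concretely, using $\#\{y \in \Fq : y^n = a\} = \sum_{\carmult^n = \carmulttriv} \carmult(a) + [a=0]$ (under the convention $\carmult(0) = 0$ for every character) and summing over $x$ subject to the constraint, the contribution from $\carmult = \carmulttriv$ together with the indicator $[a=0]$ collapses to $\#\{x : \lambda x_1 \cdots x_l = 1\} = (q-1)^{l-1}q^{k-l}$, which matches the main term in the theorem. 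It then remains to show that for each non-trivial $\carmult$ with $\carmult^n = \carmulttriv$, the sum
\[
T_\carmult := \sum_{x : \lambda x_1 \cdots x_l = 1} \carmult(F(x))
\]
equals $\frac{1}{q-1} \sum_{\carmultbis} N_{\lambda, \carmult, \carmultbis} \carmultbis(\lambda)$, where $F(x) = x_1^{\alpha_1} \cdots x_k^{\alpha_k} (1-x_1)^{\beta_1} \cdots (1-x_{l-1})^{\beta_{l-1}} (1-x_l-\cdots-x_k)^{\beta_l}$.

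To evaluate $T_\carmult$, I would substitute the identity $[\lambda x_1 \cdots x_l = 1] = \frac{1}{q-1}\sum_{\carmultbis} \carmultbis(\lambda x_1 \cdots x_l)$, a direct consequence of \frml{\eqref{formula:Fourier.inversion}} applied to $f(u) = [u=1]$. The sum then factors cleanly: for each $j \in \{1,\ldots,l-1\}$ the variable $x_j$ contributes $\sum_{x_j \in \Fqnonzero} (\carmult^{\alpha_j}\carmultbis)(x_j)\, \carmult^{\beta_j}(1-x_j) = \jacobisum(\carmult^{\alpha_j}\carmultbis, \carmult^{\beta_j})$, while the coupled sum over $x_l, \ldots, x_k$, after the linear change of variable $x_{k+1} = 1 - x_l - \cdots - x_k$, becomes the Jacobi sum $\jacobisum(\carmult^{\alpha_l}\carmultbis, \carmult^{\alpha_{l+1}}, \ldots, \carmult^{\alpha_k}, \carmult^{\beta_l})$.

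Applying \frml{\eqref{formula:jacobi.sums}} to each of these $l$ Jacobi sums then yields the stated Gauss sum expression, with the $l-1$ denominators $\gausssum{\carmult^{\alpha_j+\beta_j}\carmultbis}$ together with the last denominator $\gausssum{\carmult^{\alpha_l+\cdots+\alpha_k+\beta_l}\carmultbis}$, and with the factor $1/q^\nu$ accounting for exactly those Jacobi sums whose characters multiply to the trivial character. The main obstacle in this plan is the degenerate case where all characters appearing in some Jacobi sum are simultaneously trivial, a regime not directly covered by \frml{\eqref{formula:jacobi.sums}}; one has to verify, using $\carmult \neq \carmulttriv$ together with the structure of the exponents $\alpha_j$ and $\beta_j$, that either this case does not arise for the $\carmultbis$'s contributing to $T_\carmult$ or else that its direct evaluation still agrees with the formal quotient of Gauss sums written in $N_{\lambda,\carmult,\carmultbis}$.
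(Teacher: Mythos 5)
Your proposal follows essentially the same route as the paper's proof: split the count over $y$ into a main term $(q-1)^{l-1}q^{k-l}$ plus a sum over non-trivial characters $\carmult$ with $\carmult^n=\carmulttriv$, remove the constraint $\lambda x_1\cdots x_l=1$ by expanding the indicator as $\frac{1}{q-1}\sum_{\carmultbis}\carmultbis(\lambda x_1\cdots x_l)$, factor the inner sum into the $l$ Jacobi sums $\jacobisum(\carmult^{\alpha_j}\carmultbis,\carmult^{\beta_j})$ for $j\leq l-1$ and $\jacobisum(\carmult^{\alpha_l}\carmultbis,\carmult^{\alpha_{l+1}},\dots,\carmult^{\alpha_k},\carmult^{\beta_l})$, and convert these to Gauss sums via \frml{\eqref{formula:jacobi.sums}}, the factor $q^{-\nu}$ arising from the Jacobi sums whose characters multiply to $\carmulttriv$. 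The degenerate all-characters-trivial case you flag at the end (which can only occur when some $\beta_j$ or $\alpha_i$, $i>l$, is killed by $\carmult$) is a real caveat, but it is one the paper's own proof passes over in silence, so your attempt is if anything slightly more careful.
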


\begin{proof}
To simplify, we shall write $y^n = Q(x_1,\dots,x_k)$ for the first equation defining $\hypergeomvar$. We have
\begin{align*}
\card{\hypergeomvar(\Fq)}
& = \sum_{\substack{x \in \Fq^k\text{, }y \in \Fq \\ y^n = Q(x) \\ \lambda x_1 \dots x_l = 1}}{1} = \sum_{\substack{x \in \Fq^k \\ \lambda x_1 \dots x_l = 1 \vphantom{Q(x)}}}{\sum_{\substack{y \in \Fq \vphantom{\Fq^k} \\ y^n = Q(x)}}{1}},
\end{align*}
with
\[\card{\setst{y \in \Fq}{y^n = z}} = \begin{cases} 1 & \text{if $z=0$,} \\ \smash{1 + \sum\limits_{\substack{\carmult^n = \carmulttriv \\ \carmult\neq\carmulttriv}}{\carmult(z)}} & \text{if $z \neq 0$,}\end{cases} \vrule height 0pt depth 24pt width 0pt\]
and thus
\begin{align*}
\card{\hypergeomvar(\Fq)}
& = \sum_{\substack{x \in \Fq^k \\ \lambda x_1 \dots x_l = 1 \\ Q(x) = 0}}{1} + \sum_{\substack{x \in \Fq^k \\ \lambda x_1 \dots x_l = 1 \\ Q(x) \neq 0}}{\biggl( 1+\sum_{\substack{\carmult^n = \carmulttriv \\ \carmult \neq \carmulttriv}}{\carmult(Q(x))}\biggr)} \\
& = \sum_{\substack{x \in \Fq^k \\ \lambda x_1 \dots x_l = 1}}{1} + \sum_{\substack{x \in \Fq^k \\ \lambda x_1 \dots x_l = 1 \\ Q(x) \neq 0}}{\sum_{\substack{\carmult^n = \carmulttriv \\ \carmult \neq \carmulttriv}}{\carmult(Q(x))}} \\
& = (q-1)^{l-1} q^{k-l} + \sum_{\substack{\carmult^n = \carmulttriv \\ \carmult \neq \carmulttriv}}{\sum_{\substack{x \in \Fq^k \\ \lambda x_1 \dots x_l = 1 \\ Q(x) \neq 0}}{\carmult(Q(x))}} \\
& = (q-1)^{l-1} q^{k-l} + \sum_{\substack{\carmult^n = \carmulttriv \\ \carmult \neq \carmulttriv}}{\sum_{\substack{x \in \Fq^k \\ Q(x) \neq 0}}{\carmult(Q(x))\delta_{\lambda x_1 \dots x_l, 1}}},
\end{align*}
where $\delta_{z,z'}$ is the Kronecker delta ($=1$ if $z=z'$ and $=0$ otherwise). Because
\[\forall z,z' \in \Fqnonzero, \quad \delta_{z,z'} = \frac{1}{q-1} \sum_{\carmultbis \in \Fqnonzerocar}{\carmultbis(\tfrac{z}{z'})},\]
we may write
\begin{align*}
\card{\hypergeomvar(\Fq)}
& = (q-1)^{l-1} q^{k-l} \\
& \quad + \sum_{\substack{\carmult^n = \carmulttriv \\ \carmult \neq \carmulttriv}}{\frac{1}{q-1}\sum_{\carmultbis \in \Fqnonzerocar}{\biggl(\sum_{\substack{x \in \Fq^k \\ Q(x) \neq 0}}{\carmult(Q(x))\carmultbis(x_1 \dots x_l)}\biggr) \carmultbis(\lambda)}}.
\end{align*}
Let us compute $N_{\lambda,\carmult,\carmultbis} = \sum_{Q(x) \neq 0}{\carmult(Q(x))\carmultbis(x_1 \dots x_l)}$. As $\alpha_i$ and $\beta_i$ are $>0$,
\begin{multline*}
N_{\lambda,\carmult,\carmultbis} = \smash{\sum_{\substack{(x_1,\dots,x_k) \in (\Fqnonzero)^k \\ \forall i \leq l-1\text{, } x_i \neq 1 \\ x_l+\dots+x_k \neq 1}} (\carmult^{\alpha_1}\carmultbis)(x_1) \carmult^{\beta_1}(1-x_1) \dots (\carmult^{\alpha_{l-1}}\carmultbis)(x_{l-1})} \\ \shoveright{\carmult^{\beta_{l-1}}(1-x_{l-1}) (\carmult^{\alpha_l}\carmultbis)(x_l) \carmult^{\alpha_{l+1}}(x_{l+1}) \dots \carmult^{\alpha_{k}}(x_k)}\\
\carmult^{\beta_l}(1-x_l-\dots-x_k).
\end{multline*}
We recognize a product of Jacobi sums:
\[N_{\lambda,\carmult,\carmultbis} = \jacobisum(\carmult^{\alpha_1}\carmultbis, \carmult^{\beta_1}) \dots \jacobisum(\carmult^{\alpha_{l-1}}\carmultbis, \carmult^{\beta_{l-1}}) \jacobisum(\carmult^{\alpha_l}\carmultbis, \carmult^{\alpha_{l+1}}, \dots, \carmult^{\alpha_{k}}, \carmult^{\beta_l}).\]
By using \frml{\eqref{formula:jacobi.sums}} \pg{\pageref{formula:jacobi.sums}}, we deduce that
\begingroup\footnotesize\[N_{\lambda,\carmult,\carmultbis} = \frac{1}{q^\nu} \frac{\gausssum{\carmult^{\alpha_1}\carmultbis} \dots \gausssum{\carmult^{\alpha_l}\carmultbis} \gausssum{\carmult^{\beta_1}} \dots \gausssum{\carmult^{\beta_l}} \gausssum{\carmult^{\alpha_{l+1}}} \dots \gausssum{\carmult^{\alpha_{k}}}} {\gausssum{\carmult^{\alpha_1+\beta_1}\carmultbis} \dots \gausssum{\carmult^{\alpha_{l-1}+\beta_{l-1}}\carmultbis} \gausssum{\carmult^{\alpha_l+\dots+\alpha_{k}+\beta_l}\carmultbis}},\]\endgroup
with $\nu$ as defined in the theorem.
\end{proof}

\begin{notations}
Let $N_{\lambda,\carmult,\carmultbis}$ be as in the previous theorem; we define
\[N_{\lambda,\carmult} = \frac{1}{q-1} \sum_{\carmultbis \in \Fqnonzerocar}{\mathinner{N_{\lambda,\carmult,\carmultbis}} \carmultbis(\lambda)} \quad \text{and} \quad N_{\lambda} = \sum_{\substack{\carmult^n = \carmulttriv \\ \carmult \neq \carmulttriv}}{N_{\lambda,\carmult,\carmultbis}}.\]
\end{notations}

\begin{corollary}\label{result:nb.zeros:hypergeom:reorganise}
Assume that $n$ is odd, that none of the elements of the sequence $(\beta_1,\dots,\beta_{l},\allowbreak\alpha_{l+1},\dots,\alpha_k)$ are divisible by $n$ and that, for $1 \leq b \leq n-1$, the number of terms of the sequence $\equiv b \mod n$ is equal to the number of terms $\equiv -b \mod n$ (this implies that $k$ is even). When these conditions are met, we say we have \emph{complete pairing}. In this case,
\[N_{\lambda,\carmult,\carmultbis} = q^{\frac{k}{2}-\nu} \frac{\gausssum{\carmult^{\alpha_1}\carmultbis} \dots \gausssum{\carmult^{\alpha_l}\carmultbis}} {\gausssum{\carmult^{\alpha_1+\beta_1}\carmultbis} \dots \gausssum{\carmult^{\alpha_{l-1}+\beta_{l-1}}\carmultbis} \gausssum{\carmult^{\alpha_l+\dots+\alpha_k+\beta_l}\carmultbis}},\]
where $\nu$ is the number of trivial characters appearing in the denominator.
\end{corollary}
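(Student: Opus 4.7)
The plan is to start from the expression for $N_{\lambda,\carmult,\carmultbis}$ provided by \thrm{\ref{result:nb.points.hyper.var}} and to collapse, using the reflection formula \eqref{formula:reflection}, the $k$ Gauss sums in the numerator that do not involve $\carmultbis$, namely $\gausssum{\carmult^{\beta_1}},\dots,\gausssum{\carmult^{\beta_l}},\gausssum{\carmult^{\alpha_{l+1}}},\dots,\gausssum{\carmult^{\alpha_k}}$. These are in natural bijection with the sequence $(\beta_1,\dots,\beta_l,\alpha_{l+1},\dots,\alpha_k)$ whose residues modulo $n$ are controlled by the complete pairing hypothesis.

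First I would use the pairing hypothesis to group these $k$ Gauss sums into $k/2$ pairs of the form $\bigl(\gausssum{\carmult^b},\gausssum{\carmult^{-b}}\bigr)$ with $b\not\equiv 0\pmod n$. Because $\carmult^n=\carmulttriv$ and no exponent in the list is divisible by $n$, each $\carmult^b$ arising this way is non-trivial (in the prime case that motivates the paper, $\carmult\neq\carmulttriv$ with $\carmult^n=\carmulttriv$ forces $\carmult$ to have order exactly $n$, so there is really nothing to check). The reflection formula then yields $\gausssum{\carmult^b}\gausssum{\carmult^{-b}}=\carmult^b(-1)\,q$ for each pair.

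Next I would dispose of the $\carmult^b(-1)$ factors: since $n$ is odd, $\carmult(-1)$ satisfies both $\carmult(-1)^2=\carmult(1)=1$ and $\carmult(-1)^n=1$, so $\carmult(-1)=1$ and therefore $\carmult^b(-1)=1$ for every $b$. Multiplying the $k/2$ pair identities together collapses the offending numerator factors into exactly $q^{k/2}$, and combining this with the prefactor $1/q^\nu$ from \thrm{\ref{result:nb.points.hyper.var}} produces the announced $q^{k/2-\nu}$ in front of the remaining ratio.

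The argument is essentially bookkeeping. The only two points requiring vigilance are (i) verifying that every $\carmult^b$ used in the pairing is non-trivial, so that the reflection formula actually applies, and (ii) checking that $\carmult^b(-1)=1$ throughout, which is where the hypothesis that $n$ is odd is genuinely used. I do not foresee a real obstacle beyond organising the pairing cleanly.
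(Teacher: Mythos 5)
Your proposal is correct and follows exactly the paper's argument: collapse the $k$ Gauss sums not involving $\carmultbis$ via the reflection formula, pairing exponents $b$ with $-b$ as guaranteed by the complete-pairing hypothesis, and use the oddness of $n$ to conclude $\carmult(-1)=1$ so the product is $q^{k/2}$. The paper's proof is a terser version of the same bookkeeping (and shares the same mild reliance on $\carmult$ having full order $n$ for the non-triviality of the $\carmult^b$, which your parenthetical remark about the prime case addresses).
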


\begin{proof}
This is an immediate consequence of the reflection formula~\eqref{formula:reflection}:
\[\gausssum{\carmult^{\beta_1}} \dots \gausssum{\carmult^{\beta_l}} \gausssum{\carmult^{\alpha_{l+1}}} \dots \gausssum{\carmult^{\alpha_k}} = q^{\frac{k}{2}}.\]
(Let us note that, because $\carmult \neq \carmulttriv$ and because each $\alpha_i$ and $\beta_j$ are $\not\equiv 0 \mod n$, the characters appearing are all non trivial, and so the reflection formula applies with $\carmult(-1) = 1$ as $n$ is odd.)
\end{proof}

\subsection{Link with some hypergeometric hypersurfaces}\label{subsection:nb.points.hypergeom:hypersurface.hypergeom}

Assume that $n$ is odd and that $\alpha_1 + \beta_1 \equiv 0 \mod n$. In that case, $\hypergeomvar$ has the same number of points as the hypersurface of $\varaff{k}$ defined by
\begin{multline*}
y^n = x_2^{\alpha_2} \dots x_k^{\alpha_k} (1-x_2)^{\beta_2} \dots (1-x_{l-1})^{\beta_{l-1}}\\ \cdot (1-x_l-\dots-x_k)^{\beta_{l+1}} (1-\lambda x_2 \dots x_l)^{\beta_1}
\end{multline*}
without the points where $x_2 \dots x_l = 0$. We recover in this way a hypersurface of the same type as in \cite[\subsctn{11.1}]{CdlORV.II} when $n=5$ (see also example~\ref{example:n=5} page~\pageref{example:n=5}).

\section{Number of points of the dwork hypersurfaces}\label{section:nb.points.dwork}

In all this \sctn{\ref{section:nb.points.dwork}}, $n$ denotes an integer $\geq 3$ and our aim is to compute the number of points of $\dworkhypersurface$ and then organise it into an appropriate form to relate it to the number of points of varieties of hypergeometric type considered in \sctn{\ref{section:nb.points.hypergeom}}.

To compute the number of points of $\dworkhypersurface$ in terms of Gauss sums, it is possible to use a method close to the one A.~Weil used in \cite{Weil.nb.sols.eqn.ffields} for the diagonal case $\parameter = 0$; this is done for example in \cite[\thrm{2}, \pg{13}]{Koblitz.nb.points} and \cite[\sctn{3}]{Wan.mirror}. After recalling this computation in \subsctn{\ref{subsection:nb.points.dwork:formulas}}, we will organise the terms in the same way as Candelas, de la Ossa and Rodriguez-Villegas did for the case $n=5$ in \cite[\sctn{9}]{CdlORV.I} and \cite[\sctn{11}]{CdlORV.II}, namely (see \thrm{\ref{result:nb.zeros:psi<>0.organise}}):
\[\card{\dworkhypersurface(\Fq)} = 1 + q + \dots + q^{n-2} + N_\textup{mirror} + \sum{N_{s}}.\]
In \sctn{\ref{section:nb.points.link}}, we will explain how each $N_{s}$ is related to a $N_{\lambda} = \card{H_{\lambda}(\Fq)} - (q-1)^{l-1}q^{k-l}$ from~\sctn{\ref{section:nb.points.hypergeom}} (here, $\lambda = \frac{1}{\parameter^n}$).

\subsection{Preliminaries}\label{subsection:nb.points.dwork:prelim}

The aim of this~\subsctn{\ref{subsection:nb.points.dwork:prelim}} is to set a certain number of notations useful in what follows. The groups $\ZnZ$, $\ZnZinv$ and $\Sn$ act on each $(s_1,\dots,s_n) \in (\ZnZ)^n$ satisfying $s_1 + \dots + s_n = 0$ in the following way:
\begin{alignat*}{2}
& \forall j \in \ZnZ, \quad&& j \cdot (s_1,\dots,s_n) = (s_1+j,\dots,s_n+j);\\
& \forall k \in \ZnZinv, \quad&& k \times (s_1,\dots,s_n) = (ks_1,\dots,ks_n);\\
& \forall \sigma \in \Sn, \quad&& {}^\sigma (s_1,\dots,s_n) = (s_{\sigma^{-1}(1)},\dots,s_{\sigma^{-1}(n)}).
\end{alignat*}

\begin{definition}\label{definition:gammas}
Consider an $s = (s_1,\dots,s_n) \in (\ZnZ)^n$ such that $s_1 + \dots + s_n = 0$; we denote by
\begin{subdefinition}
    \item $\classZnZ{s} = \classZnZ{s_1,\dots,s_n}$ the class of $(s_1,\dots,s_n)$ mod the action of $\ZnZ$;
    \item $\classSn{s} = \classSn{s_1,\dots,s_n}$ the class of $(s_1,\dots,s_n)$ mod the simultaneous actions of $\ZnZ$ and $\Sn$;
    \item $\classSnZnZinv{s}$ the class of $(s_1,\dots,s_n)$ mod the simultaneous actions of $\ZnZ$, $\Sn$ and $\ZnZinv$;
    \item $\multSn{s}$ the number of permutations of $(s_1,\dots,s_n)$.
\end{subdefinition}
\end{definition}

\begin{remarks} \ 
\begin{subremarks}
    \item The number $\multSn{s}$ only depends on $\classSnZnZinv{s}$, not on the choice of $s$.
    \item If all the $s_i$ are equal, then $\multSn{s} = 1$.
    \item If $\classSn{s} = \classSn{0,1,2,\dots,n-1}$, then $\multSn{s} = n!$ but the number of permutations of $\classZnZ{s}$ is $n!/n = (n-1)!$ (the $1/n$ comes from the fact that adding the same number to each coordinate amounts to a circular permutation).
\end{subremarks}
\end{remarks}

The following lemma, which we will only use later (see \lmm{\ref{result:K.divides.gamma}}), shows that, when $n$ is prime, the number $\multSn{s}$ of permutations of $(s_1,\dots,\allowbreak s_n)$ is almost always the same as the number of permutations of $\classZnZ{s_1,\dots,s_n}$.

\begin{lemma}\label{result:nb.perm.s=nb.perm.[s]}
Assume that $n$ is prime. If $\classSn{s_1,\dots,s_n} \neq \classSn{0,1,2,\dots,n-1}$, then $\multSn{s}$ is equal to the number of permutations of $\classZnZ{s_1,\dots,s_n}$.
\end{lemma}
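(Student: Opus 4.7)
The plan is to define
\[
H = \setst{j \in \ZnZ}{j \cdot s \text{ is a permutation of } s}.
\]
Because translation and coordinate permutation commute, $\sigma_1 \cdot s = s + j_1$ and $\sigma_2 \cdot s = s + j_2$ give $(\sigma_1 \sigma_2)\cdot s = s + j_1 + j_2$, so $H$ is a subgroup of $\ZnZ$. The $\multSn{s}$ tuples $\tau \cdot s$ ($\tau \in \Sn$) project to classes $\classZnZ{\cdot}$ with fibres of constant size $\card{H}$ (translation acts freely on tuples, and the $\Sn$-orbit of $s$ meets each translation orbit in exactly $\card{H}$ points, all translates of $s$ by elements of $H$). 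Hence the number of permutations of $\classZnZ{s}$ equals $\multSn{s}/\card{H}$, and the lemma reduces to showing $\card{H} = 1$. Since $n$ is prime, $H$ is either $\set{0}$ or all of $\ZnZ$, so I argue by contradiction, assuming $H = \ZnZ$.

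Exploiting $1 \in H$, pick $\sigma \in \Sn$ with $\sigma \cdot s = s + 1$; iteration yields $\sigma^k \cdot s = s + k$ for every $k \geq 0$. If $d$ denotes the order of $\sigma$, the equality $s = \sigma^d \cdot s = s + d$ forces $n \mid d$. The crux of the argument, and the precise place where primality is used, is that an element of $\Sn$ whose order is divisible by the prime $n$ must contain a cycle of length divisible by $n$ in its cycle decomposition; since cycle lengths are at most $n$ and sum to $n$, such an element is forced to be a single $n$-cycle. Therefore $\sigma$ is an $n$-cycle. This step is really the only non-formal point of the proof, and it would fail for composite $n$ (e.g.\ a product of disjoint cycles whose lengths have lcm divisible by $n$ without any cycle of length $n$).

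Once $\sigma$ is known to be an $n$-cycle, the coordinate-wise form $s_{\sigma^{-k}(i)} = s_i + k$ of the iterated identity, specialised to $i = 1$ for $k \in \set{0,1,\dots,n-1}$, makes $\setst{\sigma^{-k}(1)}{0 \leq k < n}$ equal to $\set{1,\dots,n}$ and hence the multiset $\set{s_1,\dots,s_n}$ equal to $\setst{s_1 + k}{0 \leq k < n} = \ZnZ$. So $(s_1,\dots,s_n)$ is a permutation of $(0,1,\dots,n-1)$, whence $\classSn{s_1,\dots,s_n} = \classSn{0,1,\dots,n-1}$, contradicting the hypothesis.
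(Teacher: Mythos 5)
Your proof is correct, but the heart of it runs along a genuinely different line from the paper's. The paper disposes of the key point in one sentence: if ${}^{\sigma\!}s = s + j$ for some nonzero $j$, then the \emph{set of values} $\set{s_1,\dots,s_n}$ is a nonempty subset of $\ZnZ$ stable under $x \mapsto x+j$, hence equal to $\ZnZ$ because $j$ generates $\ZnZ$ when $n$ is prime; this forces $\classSn{s} = \classSn{0,1,\dots,n-1}$. You instead analyse the witnessing permutation $\sigma$ itself: its order is divisible by $n$, and since $n$ is prime a permutation in $\Sn$ of order divisible by $n$ must be an $n$-cycle, after which tracking the orbit of a single index recovers the same conclusion that every residue occurs among the $s_i$. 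Both arguments are valid and both invoke primality, but at different places (the paper: $j$ generates $\ZnZ$; you: the cycle-structure constraint, on top of the subgroup dichotomy for $H$). The paper's route is shorter and avoids any discussion of cycle types. On the other hand, your explicit verification that the number of permutations of $\classZnZ{s}$ equals $\multSn{s}/\card{H}$ (translations act freely on tuples and commute with the $\Sn$-action, so the $\Sn$-orbit meets each translation class it touches in exactly $\card{H}$ points) is a genuine service: the paper leaves that reduction entirely implicit, stating only the contrapositive of the $H=\set{0}$ claim and recording it afterwards as \rmrk{\ref{remarque:existence.j}}.
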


\begin{proof}
If there exists $j \in \ZnZ$ non zero such that $(s_1+j,\dots,s_n+j)$ is a permutation of $(s_1,\dots,s_n)$, then $\set{s_1,\dots,s_n}$ is a nonempty subset of $\ZnZ$ stable by $x \mapsto x+j$ and thus equal to $\ZnZ$ as $n$ is prime. Consequently, $\classSn{s} = \classSn{0,1,2,\dots,n-1}$.
\end{proof}

\begin{remark}\label{remarque:existence.j}
This proof shows that, when $\classSn{s} \neq \classSn{0,1,\dots,n-1}$, the only $j \in \ZnZ$ such that there exists $\sigma \in \Sn$ satisfying ${}^{\sigma\!}s = s + j$ is $j=0$.
\end{remark}

\subsection{Formula for the number of points of \texorpdfstring{$\dworkhypersurface$}{Xpsi}}\label{subsection:nb.points.dwork:formulas}

The aim of this~\subsctn{\ref{subsection:nb.points.dwork:formulas}} is to prove~\thrm{\ref{result:nb.zeros:psi<>0}} below, stated in a slightly different form by Koblitz in \cite[\sctn{3}]{Koblitz.nb.points}. From now on, we resume using the notations and assumptions of the introduction: $\Fq$ is a finite field, $n$ an integer $\geq 3$ such that $q \equiv 1 \mod n$, $\parameter \in \Fq$ is a non-zero parameter (but we don't yet suppose that $\parameter^n \neq 1$) and $\dworkhypersurface$ is the hypersurface of $\varprojFq{n-1}$ given by $x_1^n + \dots + x_n^n - n \parameter x_1 \dots x_n = 0$. 

\begin{theorem}[Koblitz]\label{result:nb.zeros:psi<>0}
We have
\begin{align*}
\card{\dworkhypersurface(\Fq)}
& = 1 + q + \dots + q^{n-2} \\
& \quad + \frac{1}{q-1} \sum_{\vphantom{\Fqnonzerocar}\classZnZ{s}}{\sum_{\carmultbis \in \Fqnonzerocar} \frac{1}{q^\delta}{\biggl({\prod_{i=1}^{n}\gausssum{\carmult^{-s_i}\carmultbis^{-1}}}\biggl) \gausssum{\carmultbis^n} \carmultbis(\tfrac{1}{(-n\parameter)^n})}},
\end{align*}
where $\delta = 0$ if one of the $\carmult^{s_i}\carmultbis$ is trivial and $\delta = 1$ otherwise.
\end{theorem}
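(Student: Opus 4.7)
The plan is to derive the formula in the style of Weil and Koblitz: apply additive-character orthogonality to detect the defining equation, then Fourier invert via Gauss sums to replace the additive character by multiplicative ones. Since the origin is the unique affine zero of $f := x_1^n + \dots + x_n^n - n\parameter x_1\dots x_n$ not contributing to $\dworkhypersurface$ and every other zero produces a $(q-1)$-orbit of projective points, one has $\card{\dworkhypersurface(\Fq)} = (N-1)/(q-1)$ with $N$ the number of affine zeros. Formula~\eqref{formula:orthogonality:car.add} then gives $N = q^{n-1} + q^{-1}\sum_{a\in\Fqnonzero}S(a)$ with $S(a) := \sum_{x\in\Fq^n}\caradd(af(x))$, and the $q^{n-1}$ term produces the geometric part $1+q+\dots+q^{n-2}$ after projective normalisation.

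For each $a\neq 0$ I would split $\Fq^n = U \sqcup V$ with $U = (\Fqnonzero)^n$. On $V$ the monomial $x_1\dots x_n$ vanishes, so $\caradd(-an\parameter\prod_i x_i) = 1$ and the $V$-contribution reduces to the Fermat-type sum $S_V(a) = T_a^n - T_a^{*n}$ with $T_a := \sum_{x\in\Fq}\caradd(ax^n)$ and $T_a^* := T_a - 1$. On $U$ I apply the Fourier inversion formula~\eqref{formula:caradd=gauss.sums} to $\caradd(-an\parameter\prod_i x_i)$ (producing a character $\carmultbis$) and to each $\caradd(ax_i^n)$ (producing characters $\mu_i$); multiplicative orthogonality over the $x_i\in\Fqnonzero$ then forces $\mu_i^n = \carmultbis^{-1}$. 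Using $q\equiv 1 \bmod n$ to fix an $n$-th root $\mu_0$ of $\carmultbis^{-1}$ and writing $\mu_i = \mu_0\carmult^{s_i}$ with $s_i\in\ZnZ$, summation over $a\in\Fqnonzero$ imposes $s_1+\dots+s_n\equiv 0 \pmod n$.

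To recover the stated form I let $\mu_0$ run freely over $\cargroup{\Fqnonzero}$ and rename it $\carmultbis$ (so the original $\carmultbis$ becomes $\carmultbis^{-n}$, hit $n$ times). This is compatible because the summand is invariant under the joint translation $(s, \carmultbis)\mapsto(s+j,\carmult^{-j}\carmultbis)$ (using $\carmult^n=\carmulttriv$), so the factor $1/n$ cancels against the $n$ representatives in each $\ZnZ$-orbit $[s]$ on the sum-zero locus. The $U$-contribution then becomes
\[\frac{1}{q(q-1)}\sum_{[s]}\sum_\carmultbis\Bigl(\prod_i\gausssum{\carmult^{-s_i}\carmultbis^{-1}}\Bigr)\gausssum{\carmultbis^n}\,\carmultbis\bigl(1/(-n\parameter)^n\bigr),\]
which matches the theorem's summand with the $\delta=1$ prefactor $\frac{1}{q(q-1)}$ applied uniformly to \emph{all} pairs $([s],\carmultbis)$.

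It remains to identify the $V$-contribution with the ``extra mass'' $\frac{1}{q}\sum_{[s],\carmultbis\,:\,\delta=0}[\text{summand}]$ needed to upgrade the uniform $\frac{1}{q}$ factor into $\frac{1}{q^\delta}$. Expanding $T_a$ and $T_a^*$ in powers of $\carmult^{-s}(a)$ and summing over $a\neq 0$ gives $\sum_{a\neq 0}(T_a^n - T_a^{*n}) = -(q-1)\sum_{s\,:\,\sum s_i = 0,\,\exists s_j = 0}\prod_i\gausssum{\carmult^{s_i}}$. The substitution $u_i = c - s_i$ (for $c$ a chosen coordinate of $s$ equal to $0$), together with the identifications $\carmultbis = \carmult^{-c}$, $\gausssum{\carmultbis^n} = \gausssum{\carmulttriv} = -1$, and $\carmultbis(1/(-n\parameter)^n) = 1$, rewrites this — via a bijection between pairs $(s, c)$ and joint $\ZnZ$-orbits of $\delta=0$ pairs $([s], \carmultbis)$ — as exactly the required correction. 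The main obstacle I expect is precisely this last orbit bookkeeping: one has to verify that the $c$-parametrisation of orbit representatives absorbs the factor of $n$ coming from the free joint action correctly, so that the $V$-contribution ends up supplying each $\delta=0$ term of the theorem exactly once with the right sign.
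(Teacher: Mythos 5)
Your proposal is correct and follows essentially the same route as the paper: both split the affine count into the all-coordinates-nonzero part (additive-character orthogonality, Gauss-sum Fourier inversion, and the reparametrisation of the solution characters by an orbit $\classZnZ{s}$ together with a freely varying $\carmultbis$) and the some-coordinate-zero part, which reduces to a Fermat-type sum and is then absorbed into the $\delta=0$ terms by exactly the correspondence you describe. Your packaging of the latter as $\sum_{a\neq 0}(T_a^n-T_a^{*n})$ is just the paper's $\nbzerosaffdworkhypersurface[0]-\nbzerosaffdworkhypersurfaceallnotzero[0]$ computed directly instead of quoting Weil's formula, and the final bookkeeping you flag does close up: the map sending a $\delta=0$ pair $(\classZnZ{s},\carmultbis)$ with $\carmultbis=\carmult^{-b}$ to the sum-zero tuple $s-b$ (which has a vanishing coordinate) is a plain bijection onto the index set of the Fermat correction, with no residual factor of $n$ left to absorb.
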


\begin{proof}
For the sake of completeness, and because it would be just as long to deduce our formula from Koblitz', we will recall the proof given in \cite[\sctn{3}]{Koblitz.nb.points}.

Let $f(x) = x_1^n + \dots + x_n^n - n \parameter x_1 \dots x_n$ and set
\begin{align*}
& \nbzerosaffdworkhypersurface = \card{\setst{x \in \Fq^n}{f(x) = 0}}; \\
& \nbzerosaffdworkhypersurfaceallnotzero = \card{\setst{x \in (\Fqnonzero)^n}{f(x) = 0}}.
\end{align*}
As the product $x_1 \dots x_n$ is zero when one of the $x_i$ is zero, we have $\nbzerosaffdworkhypersurface - \nbzerosaffdworkhypersurfaceallnotzero = \nbzerosaffdworkhypersurface[0] - \nbzerosaffdworkhypersurfaceallnotzero[0]$, i.e.
\[\nbzerosaffdworkhypersurface = \nbzerosaffdworkhypersurface[0] + \nbzerosaffdworkhypersurfaceallnotzero - \nbzerosaffdworkhypersurfaceallnotzero[0].\]
The computation of $\nbzerosaffdworkhypersurface[0]$ is classical and goes back to A.~Weil, also we will not recall it (see \cite{Weil.nb.sols.eqn.ffields} or \cite[\thrm{10.4.2}, \pg{304}]{BEW.gauss.jacobi.sums}). By using \frml{\eqref{formula:jacobi.sums}} to express everything in terms of Gauss sums and by doing the change of variable $\carmult_i \mapsto \carmult_i^{-1}$, here is what we find:
\begin{equation}\label{equation:nbzerosaffdworkhypersurface.0}
\nbzerosaffdworkhypersurface[0] = q^{n-1} + \frac{q-1}{q} \sum_{\substack{\carmult_i^n = \carmulttriv\text{, }\carmult_i \neq \carmulttriv \\ \carmult_1\dots\carmult_n = \carmulttriv}}{\biggl(\prod_{i=1}^{n}{\gausssum{\carmult_i^{-1}}}\biggl)}.
\end{equation}
We now need to compute $\nbzerosaffdworkhypersurfaceallnotzero$ and $\nbzerosaffdworkhypersurfaceallnotzero[0]$. Both computations rely on the same method, the only difference being that, when $\parameter = 0$, the polynomial $f(x)$ is a sum of $n$ monomials instead of $n+1$ which slightly changes the result. We will only give the details for $\nbzerosaffdworkhypersurfaceallnotzero$ when $\parameter \neq 0$.

The orthogonality formula~\eqref{formula:orthogonality:car.add} \pg{\pageref{formula:orthogonality:car.add}} for additive characters shows that
\begin{align*}
\nbzerosaffdworkhypersurfaceallnotzero
& = \frac{1}{q} \sum_{a \in \Fq}{\sum_{x \in (\Fqnonzero)^n}}{\caradd(af(x))} \\
& = \frac{(q-1)^n}{q} + \frac{1}{q} \sum_{a \in \Fqnonzero}{\sum_{x \in (\Fqnonzero)^n}}{\biggl(\prod_{i=1}^{n}{\caradd(ax_i^n)}\biggl)\caradd(-n\parameter a x_1\dots x_n)}.
\end{align*}
We now express each $\caradd(\ldots)$ in terms of Gauss sums thanks to \frml{\eqref{formula:caradd=gauss.sums}} \pg{\pageref{formula:caradd=gauss.sums}}:
\begin{multline*}
\nbzerosaffdworkhypersurfaceallnotzero = \frac{(q-1)^n}{q} \\
\shoveright{+ \frac{1}{q}\sum_{\carmultbis_1,\dots,\carmultbis_{n+1} \in \Fqnonzerocar}\biggl(\prod_{i=1}^{n+1}{\gausssum{\carmultbis_i^{-1}}}\biggl) \biggl(\frac{1}{q-1}\sum_{a \in \Fqnonzero}{(\carmultbis_1\dots\carmultbis_{n+1})(a)}\biggl)} \\ \prod_{i=1}^{n}{\biggl(\frac{1}{q-1}\sum_{x_i \in \Fqnonzero}{(\carmultbis_i^n\carmultbis_{n+1})(x_i)}\biggl)}\carmultbis_{n+1}(-n\parameter).\end{multline*}
Using orthogonality formulas, the sums over $a$ and the $x_i$ are all non-zero (equal to $q-1$) if and only if
\[\begin{system}\carmultbis_1\dots\carmultbis_n\carmultbis_{n+1} = \carmulttriv \\ \forall i \in \integerinterval{1}{n}, \quad \carmultbis_i^n \carmultbis_{n+1} = \carmulttriv \end{system} \text{i.e.} \quad \exists \carmultbis \in \Fqnonzerocar, \quad \begin{system} \carmultbis_i = \carmult_i \carmultbis \\ \carmult_i^n = \carmulttriv \text{ and } \carmult_1\dots\carmult_n = \carmulttriv \\ \carmultbis_{n+1} = \carmultbis^{-n}\end{system}\]
The character $\carmultbis$ defined in this way is not unique; indeed, if $\carmultbis '$ and $\carmult_i '$ are also solutions of the system, there exists $\carmult$ satisfying $\carmult^n = \carmulttriv$ such that $\carmultbis ' = \carmult^{-1} \carmultbis$ and $\carmult_i ' = \carmult \carmult_i$ for all $i$. This means that if $R$ is a representative set of the $n$-uples $(\carmult_1,\dots,\carmult_n)$ of characters mod the $(\carmult,\dots,\carmult)$ satisfying $\carmult_i^n = \carmulttriv$ and $\carmult_1\dots \carmult_n = \carmulttriv$ with $\carmult^n = \carmulttriv$, the map $(\carmult_1,\dots,\carmult_n,\carmultbis) \mapsto (\carmult_1\carmultbis,\dots,\carmult_n\carmultbis,\carmultbis^{-n})$ is a one-to-one map of $R \times \Fqnonzerocar$ onto the set of $(n+1)$-uples $(\carmultbis_1,\dots,\carmultbis_{n+1})$ satisfying the preceding conditions. From this, it results that, if $\carmult$ is a multiplicative character of order $n$,
\begin{align}\label{equation:nbzerosaffdworkhypersurfaceallnotzero}
\nbzerosaffdworkhypersurfaceallnotzero
& = \frac{(q-1)^n}{q} \\
\notag & \quad + \frac{1}{q}\sum_{\substack{\vphantom{\Fqnonzerocar}\classZnZ{s}}}{\sum_{\carmultbis \in \Fqnonzerocar} {\biggl(\prod_{i=1}^{n}{\gausssum{\carmult^{-s_i}\carmultbis^{-1}}}\biggl) \gausssum{\carmultbis^n} \carmultbis(\tfrac{1}{(-n\parameter)^n})}}.
\end{align}
This ends the computation of $\nbzerosaffdworkhypersurfaceallnotzero$. By a similar method, we find
\begin{equation}\label{equation:nbzerosaffdworkhypersurfaceallnotzero.0}
\nbzerosaffdworkhypersurfaceallnotzero[0] = \frac{(q-1)^n}{q} + \frac{q-1}{q}\sum_{\substack{\carmult_i^n = \carmulttriv \\ \carmult_1\dots\carmult_n = \carmulttriv}} {\biggl(\prod_{i=1}^{n}{\gausssum{\carmult_i^{-1}}}\biggl)}.
\end{equation}

From \eqref{equation:nbzerosaffdworkhypersurface.0} and \eqref{equation:nbzerosaffdworkhypersurfaceallnotzero.0}, we obtain
\begin{align*}
\nbzerosaffdworkhypersurface[0] - \nbzerosaffdworkhypersurfaceallnotzero[0]
& = q^{n-1} - \frac{(q-1)^n}{q} - \frac{q-1}{q} \sum_{\substack{\carmult_i^n = \carmulttriv \\ \carmult_1\dots\carmult_n = \carmulttriv \\ \exists i\text{, } \carmult_i = \carmulttriv}} {\biggl(\prod_{i=1}^{n}{\gausssum{\carmult_i^{-1}}}\biggl)}, \\
& = q^{n-1} - \frac{(q-1)^n}{q} \\
& \quad - \frac{q-1}{q} \sum_{\substack{(\carmult_1,\dots,\carmult_n) \bmod \set{(\carmult,\dots,\carmult)} \\ \carmult_i^n = \carmulttriv,\text{ } \carmult_1\dots\carmult_n = \carmulttriv \\ \exists i\text{, } \carmult_i = \carmulttriv}}{\sum_{\substack{\carmultbis \in \Fqnonzerocar\\\carmultbis^n=\carmulttriv}} {\biggl(\prod_{i=1}^{n}{\gausssum{(\carmult_i\carmultbis)^{-1}}}\biggl)}},
\end{align*}
Writing $\carmult_i = \carmult^{s_i}$ where $\carmult$ is, as above, a character of order $n$, we transform the first sum into a sum over the $\classZnZ{s}$ such that $\exists i$, $s_i = 0$; finally, we combine the terms of this sum with those satisfying $\carmultbis^n = \carmulttriv$ in \frml{\eqref{equation:nbzerosaffdworkhypersurfaceallnotzero}} above for $\nbzerosaffdworkhypersurfaceallnotzero$. As $\gausssum{\carmulttriv} = -1$, we have, with $\delta$ as defined in the theorem,
\begin{align*}
\nbzerosaffdworkhypersurface
& = \nbzerosaffdworkhypersurfaceallnotzero + \nbzerosaffdworkhypersurface[0] - \nbzerosaffdworkhypersurfaceallnotzero[0] \\
& = q^{n-1} + \sum_{\vphantom{\Fqnonzerocar}\classZnZ{s}}{\sum_{\carmultbis \in \Fqnonzerocar} {\frac{1}{q^\delta}\biggl(\prod_{i=1}^{n}{\gausssum{\carmult^{-s_i}\carmultbis^{-1}}}\biggl) \gausssum{\carmultbis^n} \carmultbis(\tfrac{1}{(-n\parameter)^n})}}.
\end{align*}
By counting the number of zeros in the projective space instead of the affine space, we obtain the announced formula.
\end{proof}

\subsection{Reorganisation of the terms}\label{subsection:nb.points.dwork:reorg}

We keep the assumptions and notations of \subsctn{\ref{subsection:nb.points.dwork:formulas}} and suppose that $n$ is odd. The aim of this~\subsctn{\ref{subsection:nb.points.dwork:reorg}} is to write the formula obtained for $\card{\dworkhypersurface(\Fq)}$ in \thrm{\ref{result:nb.zeros:psi<>0}} in terms of some coefficients $\coeffnbzeros{s_1,\dots,s_n}{\carmult}{\carmultbis}$ which we now define.

\begin{definition}\label{definition:beta}
Let us consider $(s_1,\dots,s_n) \in (\ZnZ)^n$ such that $s_1 + \dots + s_n = 0$. If $\carmult$ is a multiplicative character of $\Fqnonzero$ of order $n$ and if $\carmultbis$ is a character of $\Fqnonzero$, we set
\begin{equation}\label{def:beta}
\coeffnbzeros{s_1,\dots,s_n}{\carmult}{\carmultbis} = q^{\frac{n+1}{2}-z-\delta} \mathinner{\frac{\gausssum{\carmultbis}\gausssum{\carmult\carmultbis} \dots \gausssum{\carmult^{n-1}\carmultbis}}{\gausssum{\carmult^{s_1}\carmultbis} \dots \gausssum{\carmult^{s_n}\carmultbis}}},
\end{equation}
where $z$ denotes the number of trivial characters in the finite sequence $(\carmult^{s_1}\carmultbis,\dots,\carmult^{s_n}\carmultbis)$ and where $\delta = 0$ if $z \neq 0$ and $\delta = 1$ if $z = 0$ (this is the same $\delta$ as in \thrm{\ref{result:nb.zeros:psi<>0}}).
\end{definition}

\begin{proposition}
With the above assumptions, we have
\[\frac{1}{q^\delta}\biggl(\prod_{i=1}^{n}{\gausssum{\carmult^{-s_i}\carmultbis^{-1}}} \biggr)\gausssum{\carmultbis^n}\carmultbis(\tfrac{1}{(-n\parameter)^n}) = \mathinner{\coeffnbzeros{s_1,\dots,s_n}{\carmult}{\carmultbis}} \carmultbis(\tfrac{1}{\parameter^n}).\]
\end{proposition}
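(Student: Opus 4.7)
The plan is to transform the left-hand side into the right-hand side by applying two tools from \sctn{\ref{section:gauss.sums}}: the reflection formula~\eqref{formula:reflection}, to move each $\gausssum{\carmult^{-s_i}\carmultbis^{-1}}$ into the denominator, and the multiplication formula~\eqref{formula:multiplication} with $d=n$, to express $\gausssum{\carmultbis^n}$ in terms of the product $\prod_{k=0}^{n-1}\gausssum{\carmult^k\carmultbis}$ that features in the definition~\eqref{def:beta} of~$\beta$.

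For the first step, I would set $\chi_i = \carmult^{s_i}\carmultbis$ and let $z$ be the number of indices $i$ with $\chi_i = \carmulttriv$. For each of the $n-z$ non-trivial $\chi_i$, the reflection formula gives $\gausssum{\chi_i^{-1}} = \chi_i(-1)\,q/\gausssum{\chi_i}$; for the $z$ trivial ones, both $\gausssum{\chi_i^{-1}}$ and $\gausssum{\chi_i}$ equal $-1$, so the $(-1)^z$ contributions cancel between numerator and denominator. Combined with $s_1+\dots+s_n = 0$, which yields $\prod_i\chi_i(-1) = \carmultbis^n(-1)$, this gives the identity $\prod_{i=1}^n\gausssum{\carmult^{-s_i}\carmultbis^{-1}} = q^{n-z}\carmultbis^n(-1)\big/\prod_{i=1}^n\gausssum{\carmult^{s_i}\carmultbis}$.

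For the second step, the multiplication formula reads $\gausssum{\carmultbis^n} = \carmultbis(n)^n \prod_{k=0}^{n-1}\gausssum{\carmult^k\carmultbis}\big/\prod_{k=1}^{n-1}\gausssum{\carmult^k}$. Since $n$ is odd and $\carmult$ has order $n$, the number $\carmult(-1)$ is simultaneously an $n$-th root of $1$ and a square root of $1$, hence $\carmult(-1) = 1$; pairing $k$ with $n-k$ under the reflection formula then gives $\prod_{k=1}^{n-1}\gausssum{\carmult^k} = q^{(n-1)/2}$. Therefore $\gausssum{\carmultbis^n} = \carmultbis(n)^n q^{-(n-1)/2}\prod_{k=0}^{n-1}\gausssum{\carmult^k\carmultbis}$.

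Finally, substituting both identities into the left-hand side and expanding $\carmultbis(1/(-n\parameter)^n) = \carmultbis(-1)^{-n}\carmultbis(n)^{-n}\carmultbis(1/\parameter^n)$ (using that $(-n)^n = -n^n$ since $n$ is odd), the combined character factor becomes $\carmultbis^n(-1)\carmultbis(n)^n\,\carmultbis(-1)^{-n}\carmultbis(n)^{-n} = \carmultbis^{n+1}(-1) = 1$ (since $n+1$ is even and $\carmultbis(-1)^2 = 1$), while the total power of $q$ sums to $(n-z-\delta) - (n-1)/2 = (n+1)/2 - z - \delta$. What is left is exactly $\coeffnbzeros{s_1,\dots,s_n}{\carmult}{\carmultbis}\,\carmultbis(1/\parameter^n)$ by~\eqref{def:beta}. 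The only real difficulty is the careful bookkeeping of the $(-1)$-signs and powers of~$q$; the structural point that makes the collapse occur is the assumption that $n$ is odd, through which both $\carmult(-1) = 1$ and $\carmultbis^{n+1}(-1) = 1$.
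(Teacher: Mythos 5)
Your proof is correct and follows exactly the paper's route: apply the reflection formula to convert $\prod_i \gausssum{\carmult^{-s_i}\carmultbis^{-1}}$ into $q^{n-z}\carmultbis(-1)^n/\prod_i\gausssum{\carmult^{s_i}\carmultbis}$, apply the multiplication formula with $d=n$ (together with $\carmult(-1)=1$ for $n$ odd) to get $\gausssum{\carmultbis^n}=\carmultbis(n)^n q^{-(n-1)/2}\prod_{k=0}^{n-1}\gausssum{\carmult^k\carmultbis}$, and combine. The only quibble is a harmless bookkeeping slip in the final sign count (the exponent of $\carmultbis(-1)$ comes out to $n-1$, not $n+1$, but either way it is even and the factor is $1$).
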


\begin{proof}
Invoking the reflection formula~\eqref{formula:reflection}, we obtain
\[\prod_{i=1}^{n}{\gausssum{\carmult^{-s_i}\carmultbis^{-1}}} = q^{n-z} \mathinner{\frac{\carmultbis(-1)^n}{\gausssum{\carmult^{s_1}\carmultbis} \dots \gausssum{\carmult^{s_n}\carmultbis}}},\]
and, using the multiplication formula~\eqref{formula:multiplication}, we get, as $n$ is odd,
\[\gausssum{\carmultbis^n} = \mathinner{\frac{\carmultbis(n)^n}{q^{\frac{n-1}{2}}}} \gausssum{\carmultbis} \gausssum{\carmult\carmultbis} \dots \gausssum{\carmult^{n-1}\carmultbis}.\]
With these two formulas, we deduce at once the result.
\end{proof}

The coefficients $\coeffnbzerosempty$ defined above satisfy the following three compatibility relations respective to the actions of the groups $\ZnZ$, $\Sn$ and $\ZnZinv$.

\begin{lemma}\label{result:beta:propr}
With the same notations and assumptions as the preceding definition,
\begin{alignat}{2}
&\label{formula:beta.Sn}\forall \sigma \in \Sn,\quad && \beta_{(s_{\sigma(1)},\dots,s_{\sigma(n)}),\carmult,\carmultbis} = \beta_{(s_1,\dots,s_n),\carmult,\carmultbis}; \\
&\label{formula:beta.ZnZ}\forall j \in \Z,\quad && \beta_{(s_1+j,\dots,s_n+j),\carmult,\carmultbis} = \beta_{(s_1,\dots,s_n),\carmult,\carmult^j\carmultbis}; \\
&\label{formula:beta.ZnZinv}\forall k \in \ZnZinv,\quad && \beta_{(ks_1,\dots,ks_n),\carmult,\carmultbis} = \beta_{(s_1,\dots,s_n),\carmult^k,\carmultbis}.
\end{alignat}
\end{lemma}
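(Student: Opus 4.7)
The plan is to verify each of the three identities directly from the definition~\eqref{def:beta} by tracking, separately, what happens to the prefactor $q^{\frac{n+1}{2}-z-\delta}$, the numerator $\gausssum{\carmultbis}\gausssum{\carmult\carmultbis} \dots \gausssum{\carmult^{n-1}\carmultbis}$, and the denominator $\gausssum{\carmult^{s_1}\carmultbis} \dots \gausssum{\carmult^{s_n}\carmultbis}$ under each of the three changes of variables. The point in all three cases is that the multiset of characters $\{\carmult^{s_i}\carmultbis\}_{1\le i\le n}$ appearing in the denominator is preserved (so $z$, and hence $\delta$, is unchanged), while the numerator, being a product over a full set of coset representatives of the cyclic group generated by $\carmult$, is either manifestly unchanged or reorganised by a permutation of factors.

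For \eqref{formula:beta.Sn}, the statement is essentially trivial: both the product in the denominator and the count $z$ depend only on the multiset $\{s_1,\dots,s_n\}$, while the numerator and the exponent of $q$ involve no $s_i$ at all. For \eqref{formula:beta.ZnZ}, I would replace $s_i$ by $s_i+j$ on the left and $\carmultbis$ by $\carmult^j\carmultbis$ on the right, and observe that $\carmult^{s_i+j}\carmultbis = \carmult^{s_i}(\carmult^j\carmultbis)$, so the two denominators agree term by term; likewise the number of trivial characters is the same on both sides. The numerator on the right is $\gausssum{\carmult^j\carmultbis}\gausssum{\carmult^{j+1}\carmultbis}\cdots\gausssum{\carmult^{j+n-1}\carmultbis}$, which, since $\carmult$ has order $n$, is just a cyclic shift of the numerator on the left. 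For \eqref{formula:beta.ZnZinv}, I would similarly note that $\carmult^{ks_i}\carmultbis = (\carmult^k)^{s_i}\carmultbis$, making the two denominators and the two counts $z$ agree; on the numerator side the product $\gausssum{\carmultbis}\gausssum{\carmult^k\carmultbis}\cdots\gausssum{\carmult^{k(n-1)}\carmultbis}$ is a rearrangement of $\gausssum{\carmultbis}\gausssum{\carmult\carmultbis}\cdots\gausssum{\carmult^{n-1}\carmultbis}$ because $k$ is invertible mod $n$ and hence multiplication by $k$ is a bijection of $\Z/n\Z$.

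There is no real obstacle here; the computation is purely bookkeeping. The only point that deserves a line of justification is that in \eqref{formula:beta.ZnZ} and \eqref{formula:beta.ZnZinv} the numerator, indexed by $i=0,\dots,n-1$, indeed closes up into a full orbit under the relevant action (cyclic shift, resp.\ multiplication by $k\in\ZnZinv$), which uses that $\carmult$ has exact order $n$ and that $k$ is a unit modulo $n$. Since the prefactor $q^{\frac{n+1}{2}-z-\delta}$ is the same on both sides in each identity, combining these observations immediately gives the three equalities.
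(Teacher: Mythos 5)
Your proof is correct and follows exactly the paper's own argument: \eqref{formula:beta.Sn} is immediate from the definition, and \eqref{formula:beta.ZnZ} and \eqref{formula:beta.ZnZinv} follow because the denominator multiset (hence $z$ and $\delta$) matches term by term while the numerator $\gausssum{\carmultbis}\gausssum{\carmult\carmultbis}\dots\gausssum{\carmult^{n-1}\carmultbis}$ is merely permuted when $\carmultbis$ is replaced by $\carmult^j\carmultbis$ or $\carmult$ by $\carmult^k$ with $k$ prime to $n$. The paper states this more tersely, but the content is identical.
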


\begin{proof}
\frml{\eqref{formula:beta.Sn}} results immediately from the definition of $\coeffnbzerosempty$. As for \eqref{formula:beta.ZnZ} and \eqref{formula:beta.ZnZinv}, we note that the product $\gausssum{\carmultbis}\gausssum{\carmult\carmultbis} \dots \allowbreak\gausssum{\carmult^{n-1}\carmultbis}$ in \frml{\eqref{def:beta}} stays the same if we change $\carmultbis$ into $\carmult^j \carmultbis$ or if we change $\carmult$ into $\carmult^k$ with $k$ prime to $n$.
\end{proof}

\begin{proposition}\label{definition:Ns}
Under the same assumptions as above, the following quantities only depend on $\classSn{s}$ (as well as on the choice of $\carmult$) and of $\classSnZnZinv{s}$ respectively and not on the choice of the representative $(s_1,\dots,s_n)$:
\begin{align*}
& N_{\classSn{s},\carmult} = \frac{1}{q-1} \sum_{\carmultbis \in \Fqnonzerocar}{\mathinner{\coeffnbzeros{s_1,\dots,s_n}{\carmult}{\carmultbis}} \carmultbis(\tfrac{1}{\parameter^n})};\\
& N_{\classSnZnZinv{s}} = \multSn{s} \sum_{\classSn{s'} \in \classSnZnZinv{s}}{N_{\classSn{s'},\carmult}}.
\end{align*}
\end{proposition}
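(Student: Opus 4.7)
The plan is to deduce both claims directly from the compatibility relations of Lemma \ref{result:beta:propr}, combined with a change of variable in the inner sum over $\carmultbis$. Crucially, the argument $\tfrac{1}{\parameter^n}$ appearing in the definition of $N_{\classSn{s},\carmult}$ has been chosen precisely so that twists by $\carmult^j$ vanish.

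For the first claim---that $N_{\classSn{s},\carmult}$ depends only on $\classSn{s}$ (and $\carmult$)---$\Sn$-invariance is instantaneous from \eqref{formula:beta.Sn}, since the whole summand is untouched. For invariance under the shift $(s_1,\dots,s_n) \mapsto (s_1+j,\dots,s_n+j)$, I would use \eqref{formula:beta.ZnZ} to rewrite $\coeffnbzeros{s_1+j,\dots,s_n+j}{\carmult}{\carmultbis}$ as $\coeffnbzeros{s_1,\dots,s_n}{\carmult}{\carmult^j\carmultbis}$ and then perform the change of variable $\carmultbis' = \carmult^j\carmultbis$. This produces an extra factor of $\carmult^{-j}(\tfrac{1}{\parameter^n}) = \carmult(\parameter)^{nj}$, which equals $1$ because $\carmult$ has order $n$. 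This is the only non-routine input in the whole proof.

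For the second claim---that $N_{\classSnZnZinv{s}}$ depends only on $\classSnZnZinv{s}$---the factor $\multSn{s}$ is harmless: by the remark following Definition \ref{definition:gammas}, it already depends only on $\classSnZnZinv{s}$. The remaining task is to show that $\sum_{\classSn{s'} \in \classSnZnZinv{s}} N_{\classSn{s'},\carmult}$ does not depend on the choice of character $\carmult$ of order $n$. Writing any such character as $\carmult^k$ with $k \in \ZnZinv$ and invoking \eqref{formula:beta.ZnZinv}, I would obtain $N_{\classSn{ks'},\carmult} = N_{\classSn{s'},\carmult^k}$. Since multiplication by $k$ is a bijection on the set of $\classSn{\cdot}$-classes inside $\classSnZnZinv{s}$, reindexing the sum immediately yields
\[\sum_{\classSn{s'} \in \classSnZnZinv{s}} N_{\classSn{s'},\carmult^k} = \sum_{\classSn{s'} \in \classSnZnZinv{s}} N_{\classSn{s'},\carmult},\]
which completes the argument.

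I do not expect any serious obstacle: everything reduces to reindexing the sum over $\carmultbis$ (or over $\classSn{s'}$) and invoking Lemma \ref{result:beta:propr}. The one step worth flagging is the vanishing $\carmult^j(\parameter^n) = 1$, which is the conceptual reason why the $\ZnZ$-orbit (rather than just the $\Sn$-orbit) is the correct level at which $N_{\classSn{s},\carmult}$ is defined.
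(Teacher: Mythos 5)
Your proof is correct and follows essentially the same route as the paper: both rely on the compatibility relations of \lmm{\ref{result:beta:propr}} together with reindexing the sum over $\carmultbis$ (for the $\ZnZ$-action) and the sum over the classes $\classSn{s'}$ (for the $\ZnZinv$-action). You even make explicit the one point the paper leaves implicit, namely that the change of variable $\carmultbis' = \carmult^j\carmultbis$ costs only the factor $\carmult^{-j}(\tfrac{1}{\parameter^n})=1$, which vanishes because $\tfrac{1}{\parameter^n}$ is an $n$-th power and $\carmult$ has order $n$.
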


\begin{proof}
For $N_{\classZnZ{s},\carmult}$, we just use \frml{\eqref{formula:beta.ZnZ}} and the fact that $\carmultbis \mapsto \carmult^j \carmultbis$ is a one-to-one map of $\Fqnonzerocar$ onto itself when $j \in \ZnZ$. For $N_{\classSnZnZinv{s}}$, we use \frml{\eqref{formula:beta.ZnZinv}} and the fact that $\carmult \mapsto \carmult^k$ is a one-to-one map of $\setst{\carmult \in \Fqnonzerocar}{\carmult^n = \carmulttriv}$ onto itself if $k \in \ZnZinv$.
\end{proof}

We deduce the following result.

\begin{theorem}\label{result:nb.zeros:psi<>0.organise}
Under the preceding assumptions, we have
\[\card{\dworkhypersurface(\Fq)} = 1 + q + \dots + q^{n-2} + \sum_{\classSnZnZinv{s}}{N_{\classSnZnZinv{s}}}.\]
\end{theorem}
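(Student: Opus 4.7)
The plan is to substitute the identity of the proposition immediately preceding the statement into Koblitz's formula from \thrm{\ref{result:nb.zeros:psi<>0}}, and then to perform two successive regroupings of the indexing set, first passing from $\classZnZ{s}$ to $\classSn{s}$, and then from $\classSn{s}$ to $\classSnZnZinv{s}$.

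First, I would apply the proposition to rewrite each summand of Koblitz's formula as $\coeffnbzeros{s_1,\dots,s_n}{\carmult}{\carmultbis}\,\carmultbis(\tfrac{1}{\parameter^n})$. After substitution, \thrm{\ref{result:nb.zeros:psi<>0}} becomes
\[\card{\dworkhypersurface(\Fq)} = 1+q+\dots+q^{n-2} + \sum_{\classZnZ{s}}\frac{1}{q-1}\sum_{\carmultbis\in\Fqnonzerocar}\coeffnbzeros{s_1,\dots,s_n}{\carmult}{\carmultbis}\,\carmultbis(\tfrac{1}{\parameter^n}).\]
The inner double sum is precisely the quantity $N_{\classSn{s},\carmult}$ defined in \prpstn{\ref{definition:Ns}}, which by formulas~\eqref{formula:beta.Sn} and~\eqref{formula:beta.ZnZ} of \lmm{\ref{result:beta:propr}} (combined with the change of variable $\carmultbis \mapsto \carmult^j\carmultbis$) takes the same value on every representative of $\classSn{s}$, in particular on every $\classZnZ{s}$ representative.

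Next I would regroup the outer sum by $\Sn$-orbits. Because $N_{\classSn{s},\carmult}$ is constant on each $\classSn{s}$, the contribution of a fixed $\classSn{s}$ equals $m(\classSn{s})\cdot N_{\classSn{s},\carmult}$, where $m(\classSn{s})$ is the number of distinct $\classZnZ{s'}$ contained in $\classSn{s}$. Via \rmrk{\ref{remarque:existence.j}} and \lmm{\ref{result:nb.perm.s=nb.perm.[s]}}, the stabiliser in $\Sn$ of a $\classZnZ{s}$ reduces to the stabiliser of the tuple itself, so $m(\classSn{s})=\multSn{s}$. The identity then reads
\[\card{\dworkhypersurface(\Fq)} = 1+q+\dots+q^{n-2} + \sum_{\classSn{s}}\multSn{s}\,N_{\classSn{s},\carmult}.\]
Finally, I would regroup by $\ZnZinv$-orbits: since $\multSn{s}$ depends only on $\classSnZnZinv{s}$, it factors out of the inner sum over $\classSn{s'}\in\classSnZnZinv{s}$, and the defining formula of $N_{\classSnZnZinv{s}}$ in \prpstn{\ref{definition:Ns}} delivers the theorem.

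The main obstacle is the exceptional orbit $\classSn{(0,1,\dots,n-1)}$, for which $\multSn{s}=n!$ whereas only $(n-1)!$ distinct $\classZnZ{s'}$ lie in it, so $m(\classSn{s})$ and $\multSn{s}$ differ by a factor of~$n$. I would resolve this by checking directly from \dfntn{\ref{definition:beta}} that the multiset $(\carmult^{s_1}\carmultbis,\dots,\carmult^{s_n}\carmultbis)$ runs bijectively over $(\carmultbis,\carmult\carmultbis,\dots,\carmult^{n-1}\carmultbis)$, so that $\coeffnbzeros{(0,1,\dots,n-1)}{\carmult}{\carmultbis}=q^{(n-1)/2}$ is independent of $\carmultbis$; the orthogonality of characters then gives $N_{\classSn{(0,1,\dots,n-1)},\carmult}=0$ whenever $\parameter^n\neq 1$, so the discrepancy in $m$ versus $\multSn{s}$ is invisible in the final sum.
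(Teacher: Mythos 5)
Your proposal is correct and is essentially the paper's own (unwritten) argument: the paper offers no proof beyond ``we deduce the following result,'' the intended deduction being exactly your substitution of the $\coeffnbzerosempty$-identity into \thrm{\ref{result:nb.zeros:psi<>0}} followed by regrouping the $\classZnZ{s}$ first into $\Sn$-orbits (using \lmm{\ref{result:nb.perm.s=nb.perm.[s]}} to identify the multiplicity with $\multSn{s}$) and then into $\ZnZinv$-orbits. Your explicit treatment of the exceptional orbit $\classSn{0,1,\dots,n-1}$ --- where $\multSn{s}=n!$ overcounts the $(n-1)!$ classes $\classZnZ{s'}$ by a factor of $n$ --- is a point the paper silently absorbs (note that \lmm{\ref{resultat:nb.zeros.terme.sing}} quietly uses the coefficient $(n-1)!$ rather than $n!$, and the discrepancy only vanishes when $\parameter^n\neq 1$, which is all that is needed downstream), so your added care here is warranted rather than a deviation.
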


\begin{remark}
As we will see in \subsctn{\ref{section:nb.points.dwork:identification}} below, $N_{\classSnZnZinv{0}} = N_{\textup{mirror}}$ and, when $\dworkhypersurface$ is non-singular (i.e. when $\parameter^n \neq 1$), $N_{\classSnZnZinv{(0,1,2,\dots,n-1)}} = 0$.
\end{remark}

\subsection{Identification of some of the factors}\label{section:nb.points.dwork:identification}

We keep the assumptions and notations of \subsctn{\ref{subsection:nb.points.dwork:reorg}}. Let us recall that $\mirrordworkhypersurface$ denotes the ``singular mirror'' of $\dworkhypersurface$, as specified in the introduction, and we write $N_{\textup{mirror}} = \card{\mirrordworkhypersurface(\Fq)}-(1+q+\dots+q^{n-2})$.

\begin{theorem}[Wan]\label{result:nb.zeros.mirror.factor}
$N_{\classSnZnZinv{0}} = N_{\textup{mirror}}$.
\end{theorem}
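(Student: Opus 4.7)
The plan is to make $N_{\classSnZnZinv{0}}$ completely explicit as a single sum over multiplicative characters, and then compare it with the Gauss-sum formula for $N_{\textup{mirror}}$ obtained by Wan.

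First I would identify the orbit $\classSnZnZinv{0}$. For any $j \in \ZnZ$ the $\ZnZ$-shift of $(0,\dots,0)$ is $(j,\dots,j)$, which already lies in the $\classZnZ{\cdot}$-class of $(0,\dots,0)$; consequently $\classSnZnZinv{0}$ contains only the single $\classSn{\cdot}$-class $\classSn{0,\dots,0}$, and $\multSn{(0,\dots,0)} = 1$. By Proposition~\ref{definition:Ns},
\[
N_{\classSnZnZinv{0}} = \frac{1}{q-1} \sum_{\carmultbis \in \Fqnonzerocar} \coeffnbzeros{0,\dots,0}{\carmult}{\carmultbis}\, \carmultbis\bigl(\tfrac{1}{\parameter^n}\bigr).
\]
Specialising the definition of $\coeffnbzerosempty$ with $s_1 = \dots = s_n = 0$ turns the denominator into $\gausssum{\carmultbis}^n$, cancelling one factor of $\gausssum{\carmultbis}$ in the numerator so that only $\prod_{j=1}^{n-1}\gausssum{\carmult^j \carmultbis}$ remains against $\gausssum{\carmultbis}^{n-1}$; this last product can then be re-expressed via the multiplication formula~\eqref{formula:multiplication} in terms of the single Gauss sum $\gausssum{\carmultbis^n}$. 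The cases $\carmultbis = \carmulttriv$ (for which $z = n$ and $\delta = 0$) and $\carmultbis \neq \carmulttriv$ (for which $z = 0$ and $\delta = 1$) are handled separately, producing two cleanly packaged contributions.

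Second, I would compute $\card{\mirrordworkhypersurface(\Fq)}$ directly by mimicking the proof of Theorem~\ref{result:nb.zeros:psi<>0}, starting from the equation $(y_1 + \dots + y_n)^n = (n\parameter)^n y_1 \dots y_n$. Applying the orthogonality formula~\eqref{formula:orthogonality:car.add} followed by the Fourier inversion formula~\eqref{formula:caradd=gauss.sums}, one expands the indicator of the zero locus as a sum involving a \emph{single} multiplicative character $\carmultbis$, rather than the $n$-tuple of characters indexed by $\classZnZ{s}$ that appeared in the Dwork case, precisely because the mirror equation only involves the symmetric functions $y_1 + \dots + y_n$ and $y_1 \dots y_n$. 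After the usual correction for the coordinate hyperplanes $y_i = 0$ and for the projective-versus-affine difference, this yields $\card{\mirrordworkhypersurface(\Fq)} = 1 + q + \dots + q^{n-2} + N_{\textup{mirror}}$, where $N_{\textup{mirror}}$ is an explicit sum over $\carmultbis \in \Fqnonzerocar$ built from $\gausssum{\carmultbis^n}$ and $\carmultbis(\tfrac{1}{\parameter^n})$.

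Matching the two expressions term by term will then give $N_{\classSnZnZinv{0}} = N_{\textup{mirror}}$. The main obstacle is the careful bookkeeping of the powers of $q$, the sign factors coming from $\gausssum{\carmulttriv} = -1$, and the projective correction, all of which must line up to turn the product $\prod_{j=0}^{n-1}\gausssum{\carmult^j \carmultbis}$ into the single $\gausssum{\carmultbis^n}$ that Wan's expression produces. Alternatively, since the theorem is attributed to Wan, one can simply invoke his computation from \cite[\sctn{7}]{Wan.mirror} and check coincidence using~\eqref{formula:multiplication}.
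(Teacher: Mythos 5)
The paper does not actually prove this statement: its ``proof'' is a one-line citation of \cite[\sctn{4}]{Wan.mirror}, plus a caveat about the case $q\not\equiv 1 \bmod n$. Your proposal is therefore not a variant of the paper's argument but a reconstruction of Wan's, which is a legitimate and more self-contained route. The first half of your plan is correct and is essentially already available in the paper's machinery: $\classSnZnZinv{0}$ reduces to the single class $\classSn{0,\dots,0}$ with $\multSn{s}=1$, so Proposition~\ref{definition:Ns} gives $N_{\classSnZnZinv{0}}=\frac{1}{q-1}\sum_{\carmultbis\in\Fqnonzerocar}\coeffnbzeros{0,\dots,0}{\carmult}{\carmultbis}\,\carmultbis(1/\parameter^n)$, and the unnamed proposition following Definition~\ref{definition:beta} already converts $\prod_{j=0}^{n-1}\gausssum{\carmult^{j}\carmultbis}$ into $\gausssum{\carmultbis^n}$ via the multiplication formula~\eqref{formula:multiplication}, so $\coeffnbzeros{0,\dots,0}{\carmult}{\carmultbis}$ becomes an expression in $\gausssum{\carmultbis^n}$ and $\gausssum{\carmultbis}^{n}$ alone; your case split $z=n$ versus $z=0$ is the right bookkeeping. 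The second half is where the real content lies and where your sketch is thinnest: $(y_1+\dots+y_n)^n$ is not a monomial, so the Weil--Koblitz expansion via \eqref{formula:orthogonality:car.add} and \eqref{formula:caradd=gauss.sums} does not apply verbatim to the mirror equation as it did for $\dworkhypersurface$; one must instead treat the loci $y_1\cdots y_n=0$ and $y_1+\dots+y_n=0$ separately and detect the condition $(y_1+\dots+y_n)^n/(y_1\cdots y_n)=(n\parameter)^n$ with multiplicative characters of that ratio, which is exactly the content of Wan's computation. If you carry that step out honestly (or simply cite it, as you note at the end and as the paper itself does), the term-by-term matching closes the argument; what your approach buys over the paper's is an explicit, verifiable identity rather than an external reference.
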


\begin{proof}
See \cite[\sctn{4}]{Wan.mirror}; note that the result is not known when $q \not\equiv 1 \mod n$, unless $n$ is prime (see \cite{Haessig.(n+1)-adic}).
\end{proof}

Let us recall that, in this \sctn{\ref{section:nb.points.dwork}}, the only assumption we make on $\parameter$ is that $\parameter \neq 0$.

\begin{lemma}\label{resultat:nb.zeros.terme.sing}
We have
\[N_{\classSn{0,1,2,\dots,n-1},\carmult} = \begin{cases} 0 & \text{if $\parameter^n \neq 1$,} \\ q^{\frac{n-1}{2}} & \text{if $\parameter^n = 1$,}\end{cases}\]
and so the term $N_{\classSnZnZinv{(0,1,2,\dots,n-1)}} = \mathinner{(n-1)!} N_{\classSn{0,1,2,\dots,n-1},\carmult}$ does not contribute to the zeta function $\funzeta{\dworkhypersurface/\Fq}{t}$ when $\parameter^n \neq 1$ and contributes as $(1-q^{\frac{n-1}{2}}t)^{-(n-1)!}$ when $\parameter^n = 1$.
\end{lemma}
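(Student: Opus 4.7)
The plan is to compute the coefficient $\coeffnbzeros{0,1,\dots,n-1}{\carmult}{\carmultbis}$ explicitly, show it is independent of $\carmultbis$, and then apply orthogonality in $\carmultbis$. The main observation is that the Gauss-sum ratio in the definition of $\coeffnbzerosempty$ trivialises for this particular $s$.

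First, fix the representative $(s_1,\dots,s_n)=(0,1,\dots,n-1)$ (any permutation works by \eqref{formula:beta.Sn}). Then the denominator of the ratio in \eqref{def:beta} is
\[\gausssum{\carmultbis}\gausssum{\carmult\carmultbis}\cdots\gausssum{\carmult^{n-1}\carmultbis},\]
which is exactly the numerator. Hence
\[\coeffnbzeros{0,1,\dots,n-1}{\carmult}{\carmultbis}=q^{\frac{n+1}{2}-z-\delta}.\]

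Next I compute $z$, the number of trivial characters among $(\carmultbis,\carmult\carmultbis,\dots,\carmult^{n-1}\carmultbis)$. Since $\carmult$ has order exactly $n$, these $n$ characters form a coset of the subgroup $\langle\carmult\rangle\subset\Fqnonzerocar$ of order $n$. Consequently, either the coset contains $\carmulttriv$ (iff $\carmultbis\in\langle\carmult\rangle$, iff $\carmultbis^n=\carmulttriv$), in which case $z=1$ and $\delta=0$, or it does not, in which case $z=0$ and $\delta=1$. In both cases $z+\delta=1$, so
\[\coeffnbzeros{0,1,\dots,n-1}{\carmult}{\carmultbis}=q^{\frac{n-1}{2}}\]
regardless of $\carmultbis$. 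Plugging this into the definition of $N_{\classSn{s},\carmult}$ in Proposition~\ref{definition:Ns} gives
\[N_{\classSn{0,1,\dots,n-1},\carmult}=\frac{q^{(n-1)/2}}{q-1}\sum_{\carmultbis\in\Fqnonzerocar}\carmultbis\bigl(\tfrac{1}{\parameter^n}\bigr).\]
The orthogonality formula~\eqref{formula:orthogonality:gen} applied to the abelian group $\Fqnonzero$ shows that the inner sum equals $q-1$ if $\tfrac{1}{\parameter^n}=1$ and $0$ otherwise, yielding the two cases of the statement.

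Finally, for the zeta-function contribution when $\parameter^n=1$: the entire computation above runs identically over each extension $\Fqr$ (only the cardinality $q$ changes to $q^r$), so the class $\classSnZnZinv{(0,1,\dots,n-1)}$ contributes $(n-1)!\,q^{r(n-1)/2}$ to $\card{\dworkhypersurface(\Fqr)}$. Summing the resulting geometric series in the exponential that defines $\funzeta{\dworkhypersurface/\Fq}{t}$ gives the factor $(1-q^{(n-1)/2}t)^{-(n-1)!}$. The only subtle point is keeping track of $z$ and $\delta$ to verify the ``miraculous'' cancellation $z+\delta=1$; after that every step is routine.
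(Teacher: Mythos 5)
Your proposal is correct and follows essentially the same route as the paper: observe that for $s=(0,1,\dots,n-1)$ the Gauss-sum ratio in the definition of $\coeffnbzerosempty$ cancels, check that $z+\delta=1$ (you justify this via the coset $\carmultbis\langle\carmult\rangle$, which the paper merely asserts), and conclude by orthogonality in $\carmultbis$.
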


\begin{proof}
When $\classSn{s_1,\dots,s_n} = \classSn{0,1,\dots,n-1}$, we have
\[\gausssum{\carmult^{s_1}\carmultbis} \dots \gausssum{\carmult^{s_{n}}\carmultbis} = \gausssum{\carmultbis} \gausssum{\carmult\carmultbis} \dots \gausssum{\carmult^{n-1}\carmultbis}.\]
Moreover, the number $z$ of trivial characters in the sequence $(\carmultbis,\carmult\carmultbis,\dots,\allowbreak\carmult^{n-1}\carmultbis)$ is equal to $1-\delta$ with the notations of \dfntn{\ref{definition:beta}} \pg{\pageref{definition:beta}}, and thus
\[\coeffnbzeros{0,1,\dots,n-1}{\carmult}{\carmultbis} = q^{\frac{n-1}{2}}.\]
Consequently,
\[N_{\classSn{0,1,2,\dots,n-2,n-1},\carmult} = \frac{q^{\frac{n-1}{2}}}{q-1} \sum_{\carmultbis \in \Fqnonzerocar}{\carmultbis(\tfrac{1}{\parameter^n})},\]
and we conclude by using an orthogonality formula.
\end{proof}

\begin{remark}
A similar result was given by Candelas, de la Ossa and Rodriguez-Villegas when $q = p$ and $n=5$ (see \cite[\sctn{9.3}]{CdlORV.I}).
\end{remark}

\section{Link between the number of points}\label{section:nb.points.link}

In all this \sctn{\ref{section:nb.points.link}}, we will assume that the integer $n$ is a prime $\geq 5$ and that $q \equiv 1 \mod n$. We will only add the assumption that $\parameter^n = 1$ in \thrm{\ref{result:conclusion}}.

The aim of this section is to show (in \subsctn{\ref{subsection:nb.points.link:conclusion}}) the \frml{\eqref{formula:AIM}} of the introduction. More precisely, we shall show, in \thrm{\ref{result:link.nb.points}}, that each $N_{\classSnZnZinv{s}}$ (with $\classSnZnZinv{s} \neq \classSnZnZinv{0}$)\footnote{Let us note that there does not exist any $\classSnZnZinv{s} \neq \classSnZnZinv{0}$ when $n=3$; this explains the assumption that $n \geq 5$.} appearing in \thrm{\ref{result:nb.zeros:psi<>0.organise}} is equal, up to a multiplicative integer constant and a power of $q$, to a term of the form
\begin{equation}\label{formula:N.hyp}
N_{\lambda} = \sum_{\substack{\carmult^n = \carmulttriv \\ \carmult \neq \carmulttriv}}N_{\lambda,\carmult} = \sum_{\substack{\carmult^n = \carmulttriv \\ \carmult \neq \carmulttriv}}\frac{1}{q-1}\sum_{\carmultbis \in \Fqnonzerocar}{N_{\lambda,\carmult,\carmultbis} \mathinner{\carmultbis(\lambda)}},
\end{equation}
where $\lambda = \frac{1}{\parameter^n}$ and $N_{\lambda,\carmult,\carmultbis}$ is given by \crllr{\ref{result:nb.zeros:hypergeom:reorganise}} \pg{\pageref{result:nb.zeros:hypergeom:reorganise}}.

The crucial point is, starting from a given $\classSnZnZinv{s}$, to find the integers $\alpha_i$ and $\beta_j$ which appear. For that, we define in \subsctn{\ref{subsection:nb.points.link:prelim}} integers $v_i$ and $w_i$ from which we then define the integers $\alpha_i$ and $\beta_j$ in \subsctn{\ref{subsection:nb.points.link:link}}. But before this, we start by a divisibility result useful for the main result.

\subsection{A divisibility result}\label{subsection:nb.points.link:K|gamma}

The aim of this \subsctn{\ref{subsection:nb.points.link:K|gamma}} is to show that the integer $\multSn{s}$ (from \dfntn{\ref{definition:gammas}} \pg{\pageref{definition:gammas}}) is divisible by
\[\multZnZinv{s} = \card{\setst{k \in \ZnZinv}{\classZnZ{ks_1,\dots,ks_n} \text{ is a permutation of $\classZnZ{s_1,\dots,s_n}$}}}.\]
This result is crucial in \thrm{\ref{result:link.nb.points}} to be sure that the quotient $\multSn{s}/\multZnZinv{s}$ is an integer. Note that $\multZnZinv{s}$ only depends on $\classSnZnZinv{s}$, not on the choice of $s$.

\begin{definition}
Given $s \in (\ZnZ)^n$ such that $s_1+\dots+s_n=0$, we consider the following subgroups of $\Sn$:
\begin{align*}
& \SaQellbarprime = \setst{\sigma \in \Sn}{{}^{\sigma\!}s = s};\\
& \SaQellbar = \setst{\sigma \in \Sn}{\classZnZ{{}^{\sigma\!}s} = \classZnZ{s}}; \\
& \SaQell = \setst{\sigma \in \Sn}{\classZnZ{{}^{\sigma\!}s} \in \ZnZinv \cdot \classZnZ{s}}.
\end{align*}
\end{definition}

Let us note that, with these notations, $[\Sn:\SaQellbarprime]$ is the number $\multSn{s}$ of permutations of $(s_1,\dots,s_n)$ whereas $[\Sn:\SaQellbar]$ is the number of permutations of $\classZnZ{s_1,\dots,s_n}$.

\begin{lemma}\label{result:K.divides.gamma}
When $\classSnZnZinv{s} \neq \classSnZnZinv{0}$, the integer $\multZnZinv{s}$ divides $[\Sn:\SaQellbar]$. Hence, when additionally $\classSn{s} \neq \classSn{0,1,2,\dots,\allowbreak n-1}$, $\multZnZinv{s}$ divides $\multSn{s} = [\Sn:\SaQellbarprime] = [\Sn:\SaQellbar]$.
\end{lemma}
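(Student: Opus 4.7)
The plan is to produce a free action of a group of order $\multZnZinv{s}$ on a set of cardinality $[\Sn : \SaQellbar]$, after which the divisibility falls out of orbit-stabiliser. The obvious candidate for the group is
\[K = \setst{k \in \ZnZinv}{\classZnZ{ks} \in \Sn \cdot \classZnZ{s}},\]
which has order $\multZnZinv{s}$ by definition and is a subgroup of $\ZnZinv$ (if $\classZnZ{k_i s} = \classZnZ{{}^{\sigma_i} s}$, then $\classZnZ{k_1 k_2 s} = \classZnZ{{}^{\sigma_2}(k_1 s)} = \classZnZ{{}^{\sigma_2\sigma_1} s}$, using that scalar multiplication commutes with permuting coordinates). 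The natural set to act on is the $\Sn$-orbit $\mathcal{O} := \Sn \cdot \classZnZ{s}$, of size $[\Sn : \SaQellbar]$; the same commutation law gives $k \cdot \classZnZ{{}^\sigma s} = \classZnZ{{}^\sigma (ks)}$, which stays in $\mathcal{O}$ because $\classZnZ{ks} \in \mathcal{O}$ by definition of $K$, so $K$ does act on $\mathcal{O}$.

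The heart of the argument is freeness, and this is the only place the hypothesis $\classSnZnZinv{s} \neq \classSnZnZinv{0}$ and the primality of $n$ intervene. The $K$-stabiliser of $\classZnZ{s}$ consists of the $k$ such that $ks = s + j\cdot(1,\dots,1)$ componentwise for some $j \in \ZnZ$, i.e.\ $(k-1)s_i = j$ independently of $i$. Since $n$ is prime, $\ZnZ$ is a field, so for $k \neq 1$ the element $k-1$ is invertible and the above forces $s_1 = \dots = s_n$; but a constant tuple lies in $\classSnZnZinv{0}$, contradicting the hypothesis. The identity $k \cdot \classZnZ{{}^\sigma s} = \classZnZ{{}^\sigma(ks)}$ transports this trivially to every point of $\mathcal{O}$, so the $K$-action on $\mathcal{O}$ is free throughout.

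Orbit-stabiliser then gives $\multZnZinv{s} = \card{K}$ divides $\card{\mathcal{O}} = [\Sn : \SaQellbar]$, which is the first assertion. The ``hence'' clause is immediate: the extra hypothesis $\classSn{s} \neq \classSn{0,1,\dots,n-1}$ lets one invoke \lmm{\ref{result:nb.perm.s=nb.perm.[s]}} to identify $\multSn{s}$ with $[\Sn : \SaQellbar]$. The only subtle point in the whole argument is bookkeeping: we work in $(\ZnZ)^n/\ZnZ$ rather than $(\ZnZ)^n$, so that ``$k$ fixes $\classZnZ{s}$'' yields the equation $(k-1)s_i = j$ rather than $(k-1)s_i = 0$, and primality of $n$ is exactly what converts the former into the conclusion that $s$ is constant.
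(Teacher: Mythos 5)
Your proof is correct and is essentially the paper's argument in a different dress: the free action of $K$ on the orbit $\Sn\cdot\classZnZ{s}$ is exactly the orbit-counting restatement of the paper's index identity $[\Sn:\SaQellbar]=[\Sn:\SaQell]\cdot \multZnZinv{s}$, and both hinge on the same key fact that the $\ZnZinv$-stabiliser of $\classZnZ{s}$ is trivial (which you, helpfully, justify more explicitly than the paper does, via the invertibility of $k-1$ in the field $\ZnZ$).
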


\begin{proof}
We remark that
\[\multZnZinv{s} = \frac{\card{\SaQell}}{\card{\SaQellbar}} \cdot \card{\setst{k \in \ZnZinv}{\classZnZ{k s} = \classZnZ{s}}}.\]
As $\classZnZ{s} \neq \classZnZ{0,\dots,0}$, we have $\card{\setst{k \in \ZnZinv}{\classZnZ{k s} = \classZnZ{s}}} = 1$ and so
\[[\Sn:\SaQellbar] = [\Sn:\SaQell] \cdot \multZnZinv{s}.\]
When furthermore $\classSn{s} \neq \classSn{0,1,2,\dots,\allowbreak n-1}$, we have $\multSn{s} = [\Sn:\SaQellbar]$ by \lmm{\ref{result:nb.perm.s=nb.perm.[s]}} \pg{\pageref{result:nb.perm.s=nb.perm.[s]}}, hence the result.
\end{proof}

\subsection{Transformation of the \texorpdfstring{$\coeffnbzerosempty$}{beta} coefficients}\label{subsection:nb.points.link:prelim}

In order to relate $N_{\classSnZnZinv{s}}$ to a certain $N_{1/\parameter^n}$ in \subsctn{\ref{subsection:nb.points.link:link}}, we must first change the formula giving $\coeffnbzeros{s_1,\dots,s_n}{\carmult}{\carmultbis}$.

\begin{notations}
Consider $(s_1,\dots,s_n) \in (\ZnZ)^n$ such that $s_1+\dots+s_n = 0$. For each $b \in \ZnZ$, define $k(b) = \card{\setst{i}{s_i=b}}$. We have
\[\sum_{b \in \ZnZ}{k(b)b} = 0 \quad\text{and}\quad \sum_{b \in \ZnZ}{k(b)} = n.\]
We also set $n' = \card{\setst{b \in \ZnZ}{k(b) \neq 0}}$ and $m = n -n'$.
\end{notations}

\begin{remarks}\label{remarque:propr.nprime} \ 
\begin{subremarks}
    \item The integer $n'$ satisfies $1 \leq n' \leq n$ and we have $n' = 1$ if and only if $\classZnZ{s} = \classZnZ{0,\dots,0}$ and $n' = n$ if and only if $\classSn{s} = \classSn{0,1,\dots,n-1}$.
    \item As $n$ is prime, the integer $n'$ is $\neq 2$. Indeed, if $k_1b_1+k_2b_2=0$ with $k_1,k_2 \geq 1$ and $k_1+k_2=n$, then $k_1 \not\equiv 0 \mod n$ and $k_1(b_1-b_2)=0$, hence $b_1 = b_2$.
    \item As $n$ is odd, the integer $n'$ is $\neq n-1$. Indeed, let $s_1$, \dots, $s_{n-1}$ be distinct elements of $\ZnZ$ and denote by $s_n$ the element of $\ZnZ$ not appearing in this sequence; as $n$ is odd, we have $s_1+\dots+s_n = 0$, and so $2s_1+\dots+s_{n-1} = s_1-s_n \neq 0$.
    \item\label{remarque:propr.nprime:encadrement.m} Thus, if $\classSn{s} \neq \classSn{0,1,\dots,n-1}$, then $m \geq 2$ and if, moreover, $\classZnZ{s} \neq \classZnZ{0}$, then $2 \leq m \leq n-3$.
\end{subremarks}
\end{remarks}

\begin{theorem}\label{resultat:formule.beta:k(b)}
With the preceding notations,
\begin{equation*}
\beta_{(s_1,\dots,s_n),\carmult,\carmultbis} = q^{\frac{n-1}{2}-\nu} \mathinner{\frac{\,\prod\limits_{b \in \ZnZ\text{, } k(b)=0}{\gausssum{\carmult^{b} \carmultbis}\phantom{^{k(b)-1}}}} {\,\prod\limits_{b \in \ZnZ\text{, } k(b)\neq 0}{\gausssum{\carmult^{b} \carmultbis}^{k(b)-1}}}},
\end{equation*}
where $\nu = 0$ unless there exists $b$ such that $\carmult^{b}\carmultbis = \carmulttriv$ and $k(b) \neq 0$, in which case $\nu = k(b)-1$.
\end{theorem}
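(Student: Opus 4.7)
The plan is to verify the claimed formula by a direct algebraic manipulation of the definition~\eqref{def:beta}, followed by a short case analysis to check the exponent of $q$ (and the sign factors hidden behind $\gausssum{\carmulttriv} = -1$).

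First I would regroup the Gauss-sum quotient in~\eqref{def:beta} according to the fibres of the map $i \mapsto s_i$. Because $\carmult$ has order $n$, the numerator $\gausssum{\carmultbis}\gausssum{\carmult\carmultbis}\cdots\gausssum{\carmult^{n-1}\carmultbis}$ is nothing but $\prod_{b \in \ZnZ}\gausssum{\carmult^b\carmultbis}$, whereas the denominator $\prod_{i=1}^n \gausssum{\carmult^{s_i}\carmultbis}$ regroups as $\prod_{b \in \ZnZ}\gausssum{\carmult^b\carmultbis}^{k(b)}$. Their ratio is therefore $\prod_{b \in \ZnZ} \gausssum{\carmult^b\carmultbis}^{1-k(b)}$, and splitting according to whether $k(b)=0$ or $k(b)\neq 0$ (factors with $k(b)=1$ cancelling automatically) produces exactly the quotient appearing in the statement.

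It then remains to reconcile the prefactor $q^{\frac{n+1}{2}-z-\delta}$ coming from~\eqref{def:beta} with the prefactor $q^{\frac{n-1}{2}-\nu}$ in the claim. The key observation is that among the $n$ characters $(\carmult^b\carmultbis)_{b \in \ZnZ}$ at most one can be trivial: since $\carmult$ has order $n$, there exists $b_0 \in \ZnZ$ with $\carmult^{b_0}\carmultbis = \carmulttriv$ if and only if $\carmultbis^n = \carmulttriv$, and then $b_0$ is unique. This gives three cases: (i)~$\carmultbis^n \neq \carmulttriv$, where $z=0$, $\delta=1$, $\nu=0$; (ii)~$\carmultbis^n = \carmulttriv$ and $k(b_0)=0$, where again $z=0$, $\delta=1$, $\nu=0$ (the trivial Gauss sum now sits in the new numerator); and (iii)~$\carmultbis^n = \carmulttriv$ with $k(b_0)\neq 0$, where $z = k(b_0)$, $\delta=0$, and $\nu = k(b_0)-1 = z-1$. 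In each case a one-line check gives $\frac{n+1}{2}-z-\delta = \frac{n-1}{2}-\nu$.

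I do not expect a real obstacle; the proof is essentially bookkeeping. The point demanding most care is case (iii), where the unique trivial Gauss sum migrates from the old numerator, and from its $k(b_0)$-fold appearance in the old denominator, to a $(k(b_0)-1)$-fold appearance in the new denominator. Tracking the signs $\gausssum{\carmulttriv}^{1-k(b_0)} = \gausssum{\carmulttriv}^{-(k(b_0)-1)}$ shows that the powers of $-1$ match up, so no spurious sign is introduced and the two expressions for $\coeffnbzerosempty$ coincide.
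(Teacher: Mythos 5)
Your proposal is correct and follows essentially the same route as the paper: regroup the numerator and denominator of \eqref{def:beta} over the fibres of $i \mapsto s_i$ to obtain the quotient $\prod_b \gausssum{\carmult^b\carmultbis}^{1-k(b)}$, then verify the exponent identity $z+\delta = 1+\nu$ by cases (the paper merges your cases (i) and (ii) into the single case $z=0$, and your sign-tracking remark is superfluous since the cancellation is an exact algebraic identity among nonzero Gauss sums). No gaps.
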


\begin{proof}
From the definition of $\coeffnbzeros{s_1,\dots,s_n}{\carmult}{\carmultbis}$ (\dfntn{\ref{definition:beta}} \pg{\pageref{definition:beta}}), we have
\begin{align*}
\beta_{(s_1,\dots,s_n),\carmult,\carmultbis}
& = q^{\frac{n+1}{2}-z-\delta} \mathinner{\frac{\,\prod\limits_{b \in \ZnZ}{\gausssum{\carmult^{b} \carmultbis}\phantom{^{k(b)}}}} {\,\prod\limits_{b \in \ZnZ}{\gausssum{\carmult^{b} \carmultbis}^{k(b)}}}} \\
& = q^{\frac{n+1}{2}-z-\delta} \mathinner{\frac{\,\prod\limits_{k(b)=0}{\gausssum{\carmult^{b} \carmultbis}\phantom{^{k(b)-1}}}} {\,\prod\limits_{k(b)\neq 0}{\gausssum{\carmult^{b} \carmultbis}^{k(b)-1}}}}.
\end{align*}
We now have to show that $z+\delta = 1+\nu$. Recall that $z$ is the number of trivial characters in the finite sequence $(\carmult^{s_1}\carmultbis,\dots,\carmult^{s_n}\carmultbis)$ and that
$\delta = 0$ if $z \neq 0$ and $\delta = 1$ if $z = 0$. When $z=0$, $\delta = 1$ and $\nu = 0$ hence $z+\delta=1+\nu$. When $z \neq 0$, there exists a unique $b \in \ZnZ$ such that $\carmultbis = \carmult^{-b}$; we thus have $z = k(b)$, $\delta = 0$ and $\nu = k(b)-1$, hence $z+\delta = 1+\nu$.
\end{proof}

\begin{remark}\label{definition:v.w}
Let $(v_1,\dots,v_m)$ be an enumeration of the $b \in \ZnZ$ such that $k(b) = 0$ and let $(w_1,\dots,\allowbreak w_m)$ be an enumeration of the $b \in \ZnZ$ such that $k(b) \geq 2$, each repeated with multiplicity $k(b)-1$. The formula of \thrm{\ref{resultat:formule.beta:k(b)}} can be rewritten as
\begin{equation}\label{formule:beta:w}
\beta_{(s_1,\dots,s_n),\carmult,\carmultbis} = q^{\frac{n-1}{2}-\nu} \mathinner{\frac{\gausssum{\carmult^{v_1}\carmultbis} \dots \gausssum{\carmult^{v_m}\carmultbis}}{\gausssum{\carmult^{w_1}\carmultbis} \dots \gausssum{\carmult^{w_{m}}\carmultbis}}},
\end{equation}
where $\nu$ is the number of trivial characters appearing in the denominator.
\end{remark}

\begin{lemma}\label{resultat:sum.v=sum.w}
With the notations of the preceding remark,
\[v_1 + \dots + v_m \equiv w_1 + \dots + w_m \mod n.\]
\end{lemma}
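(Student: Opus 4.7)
The plan is to derive the congruence from the two known arithmetic identities $\sum_{b \in \ZnZ}{k(b)} = n$ and $\sum_{b \in \ZnZ}{k(b)\,b} \equiv 0 \bmod n$, together with the oddness of $n$.

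First I would rewrite the two sums in closed form. By the definitions in \rmrk{\ref{definition:v.w}},
\[
\sum_{i=1}^{m}{w_i} = \sum_{\substack{b \in \ZnZ \\ k(b) \geq 2}}{(k(b)-1)\,b}
\qquad\text{and}\qquad
\sum_{i=1}^{m}{v_i} = \sum_{\substack{b \in \ZnZ \\ k(b) = 0}}{b}.
\]
The key observation is that these two sums can be glued into a single sum $\sum_{b \in \ZnZ}{(k(b)-1)\,b}$ by noting that the terms corresponding to $k(b) = 1$ vanish, that the terms corresponding to $k(b) \geq 2$ give $\sum w_i$, and that the terms corresponding to $k(b) = 0$ contribute $-\sum v_i$. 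Explicitly,
\[
\sum_{b \in \ZnZ}{(k(b)-1)\,b} = \sum_{i=1}^{m}{w_i} - \sum_{i=1}^{m}{v_i}.
\]

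Next I would compute the same quantity by linearity using the two global constraints. One has
\[
\sum_{b \in \ZnZ}{(k(b)-1)\,b} = \sum_{b \in \ZnZ}{k(b)\,b} - \sum_{b \in \ZnZ}{b} \equiv 0 - \frac{n(n-1)}{2} \pmod n.
\]
Since $n$ is odd (it is an odd prime $\geq 5$ in our setting, but the argument only needs $n$ odd), $(n-1)/2$ is an integer, so $n(n-1)/2 \equiv 0 \bmod n$.

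Combining the two expressions yields $\sum w_i - \sum v_i \equiv 0 \bmod n$, which is precisely the claim. The argument is entirely elementary; the only potential subtlety is the bookkeeping in the splitting $\sum_{b} (k(b)-1) b$ into the two pieces, and the implicit use of $n$ odd to handle $\sum_{b \in \ZnZ} b$—this is the step to state carefully rather than a real obstacle.
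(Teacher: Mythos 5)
Your argument is correct and is essentially the paper's own proof: both reduce the claim to the identity $\sum_{b}{b} = \sum_{b}{k(b)b}$ in $\ZnZ$ and conclude from $\sum_{b}{k(b)b}=0$ together with $\sum_{b \in \ZnZ}{b}=0$ for $n$ odd. The only cosmetic difference is that you package the bookkeeping into the single sum $\sum_{b}{(k(b)-1)b}$, which is a clean way to present the same computation.
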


\begin{proof}
This identity can be rewritten as
\[\sum_{k(b)=0}{b} = \sum_{k(b)\geq 1}{(k(b)-1)b} \quad\text{i.e.}\quad \sum_{b}{b} = \sum_{b}{k(b)b}.\]
We conclude by noting that $\sum_{b \in \ZnZ}{k(b)b} = 0$ and that, because $n$ is odd, $\sum_{b \in \ZnZ}{b} = 0$.
\end{proof}

\subsection{Link with the hypergeometric varieties}\label{subsection:nb.points.link:link} We now establish the link between $\dworkhypersurface$ and the varieties of hypergeometric type from \sctn{\ref{section:nb.points.hypergeom}}.

\begin{theorem}\label{result:link.nb.points}
Let $\classSnZnZinv{s}$ be distinct from the class of $(0,1,\dots,n-1)$ and of $(0,\dots,0)$. If $s$ is a representative of $\classSnZnZinv{s}$, assume that there exists two sequences $(v_1,\dots,v_m)$ and $(w_1,\dots,w_m)$ of elements of $\ZnZ$ as in \rmrk{\ref{definition:v.w}} and an even integer $m' \leq m-2$ such that
\[\forall i \in \integerinterval{1}{\tfrac{m'}{2}}, \quad w_{2i-1} - v_{2i-1} \equiv -(w_{2i} - v_{2i}) \mod n.\]
We consider the affine variety $\hypergeomvar[1/\parameter^n]$ of dimension $2m-m'-3$ given by
\[\begin{system}
y^n = x_1^{v_1} \dots x_{m}^{v_m} x_{m+1}^{v_{m'+1}-w_{m'+1}} \dots x_{2m-m'-2}^{v_{m-2}-w_{m-2}} (1-x_1)^{w_1-v_1} \dots \\
\qquad\qquad(1-x_{m-1})^{w_{m-1}-v_{m-1}} (1-x_m-\dots-x_{2m-m'-2})^{v_{m-1}-w_{m-1}} \\
x_1 \dots x_m = \parameter^n 
\end{system}\]
(In this formula, we replace the exponents by their representatives in $\integerinterval{1}{n}$.) It is a variety of the form considered in \crllr{\ref{result:nb.zeros:hypergeom:reorganise}} \pg{\pageref{result:nb.zeros:hypergeom:reorganise}} and we have, using the notations of \sctn{\ref{section:nb.points.hypergeom}},
\[N_{\classSnZnZinv{s}} = \mathinner{\frac{\multSn{s}}{\multZnZinv{s}}} q^{\frac{n+1}{2}-\frac{2m-m'}{2}} N_{1/\parameter^n}\, \qquad \text{where $\multSn{s}/\multZnZinv{s} \in \N$ by \lmm{\ref{result:K.divides.gamma}}.}\]
\end{theorem}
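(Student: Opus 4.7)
The plan is to compare two formulas for Gauss-sum expressions, one coming from $\coeffnbzeros{s_1,\dots,s_n}{\carmult}{\carmultbis}$ (as rewritten in \rmrk{\ref{definition:v.w}} via \frml{\eqref{formule:beta:w}}) and the other from the hypergeometric side (as given by \crllr{\ref{result:nb.zeros:hypergeom:reorganise}}), and then to package the sum over $\classSn{s'} \in \classSnZnZinv{s}$ into the full $N_{1/\parameter^n}$ via the $\ZnZinv$-compatibility \eqref{formula:beta.ZnZinv}.

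First I would identify the exponents of $\hypergeomvar[1/\parameter^n]$ with those appearing in \crllr{\ref{result:nb.zeros:hypergeom:reorganise}} by taking $k = 2m-m'-2$, $l = m$, $\alpha_i = v_i$ for $1 \leq i \leq m$, $\alpha_{m+j} = v_{m'+j} - w_{m'+j}$ for $1 \leq j \leq m-m'-2$, $\beta_i = w_i - v_i$ for $1 \leq i \leq m-1$, and $\beta_m = v_{m-1} - w_{m-1}$ (all exponents reduced to $\integerinterval{1}{n}$). With this identification the numerator of the Gauss-sum expression of \crllr{\ref{result:nb.zeros:hypergeom:reorganise}} is $\gausssum{\carmult^{v_1}\carmultbis}\dots\gausssum{\carmult^{v_m}\carmultbis}$; the first $m-1$ factors of the denominator are $\gausssum{\carmult^{w_i}\carmultbis}$; and for the last, telescopic, factor one computes $\alpha_l + \alpha_{l+1}+\dots+\alpha_k + \beta_l = v_m + \sum_{i=m'+1}^{m-2}(v_i - w_i) + (v_{m-1} - w_{m-1})$. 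Using \lmm{\ref{resultat:sum.v=sum.w}} together with the assumed pairing condition $w_{2i-1}-v_{2i-1}\equiv -(w_{2i}-v_{2i}) \mod n$ for $1\leq i\leq m'/2$, the sum $\sum_{i=1}^{m'}(v_i-w_i)$ is zero, so $\sum_{i=m'+1}^{m}(v_i-w_i)\equiv 0$, which collapses the exponent to $w_m \mod n$. Thus the denominators match as well.

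Next I would verify the hypothesis of \crllr{\ref{result:nb.zeros:hypergeom:reorganise}}. The sequence $(\beta_1,\dots,\beta_m,\alpha_{m+1},\dots,\alpha_k)$ consists of $w_i-v_i$ for $1\leq i\leq m-1$, then $v_{m-1}-w_{m-1}$, then $v_i-w_i$ for $m'+1\leq i\leq m-2$; none of these is $\equiv 0 \mod n$ since $\{v_i\}$ and $\{w_i\}$ are drawn from disjoint subsets of $\ZnZ$, and every term pairs with its negative (the indices $1\leq i\leq m'$ by assumption; the indices $m'+1\leq i\leq m-1$ via $\beta_m$ and the $\alpha_{m+j}$). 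A direct comparison of the prefactors then gives $\coeffnbzeros{s_1,\dots,s_n}{\carmult}{\carmultbis} = q^{\frac{n+1}{2}-\frac{2m-m'}{2}} N_{1/\parameter^n,\carmult,\carmultbis}$, since $\frac{n+1}{2}-\frac{2m-m'}{2} + \frac{k}{2} = \frac{n+1}{2}-1$ and the powers $q^{-\nu}$ on both sides count the same trivial characters in the (matched) denominators.

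Multiplying by $\carmultbis(1/\parameter^n)$ and averaging over $\carmultbis\in\Fqnonzerocar$, this becomes $N_{\classSn{s},\carmult} = q^{\frac{n+1}{2}-\frac{2m-m'}{2}} N_{1/\parameter^n,\carmult}$. Finally, the sum $\sum_{\classSn{s'} \in \classSnZnZinv{s}} N_{\classSn{s'},\carmult}$ is over the $\ZnZinv$-orbit of $\classSn{s}$; by \eqref{formula:beta.ZnZinv}, $N_{\classSn{ks},\carmult} = N_{\classSn{s},\carmult^k}$, and each $\classSn{s'}$ in the orbit is hit exactly $\multZnZinv{s}$ times as $k$ ranges over $\ZnZinv$, so the sum equals $\multZnZinv{s}^{-1}\sum_{k\in\ZnZinv} N_{\classSn{s},\carmult^k} = \multZnZinv{s}^{-1}\sum_{\substack{(\carmult')^n=\carmulttriv\\ \carmult'\neq\carmulttriv}} N_{1/\parameter^n,\carmult'}\cdot q^{\frac{n+1}{2}-\frac{2m-m'}{2}}$. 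Multiplying by $\multSn{s}$ gives the claimed identity. The main obstacle is the bookkeeping of the last Gauss sum in the denominator, where \lmm{\ref{resultat:sum.v=sum.w}} together with the pairing hypothesis must be used to produce the required $\gausssum{\carmult^{w_m}\carmultbis}$; once this collapse is secured the rest is a direct matching.
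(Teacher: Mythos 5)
Your proposal is correct and follows essentially the same route as the paper's own proof: the same identification of the $\alpha_i$, $\beta_j$ with the $v_i$, $w_i$ (with $l=m$, $k=2m-m'-2$), the same use of \lmm{\ref{resultat:sum.v=sum.w}} together with the pairing hypothesis to collapse the last denominator factor to $\gausssum{\carmult^{w_m}\carmultbis}$, the same comparison of prefactors with \frml{\eqref{formule:beta:w}}, and the same averaging over $\carmultbis$ and then over $k\in\ZnZinv$ (via \eqref{formula:beta.ZnZinv} and the stabiliser count $\multZnZinv{s}$) to assemble $N_{\classSnZnZinv{s}}$. No changes are needed.
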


\begin{proof}
As $\classSnZnZinv{s}$ is distinct from the class of $(0,1,\dots,n-1)$, we have $m \geq 2$ (see \rmrk{\ref{remarque:propr.nprime:encadrement.m}} \pg{\pageref{remarque:propr.nprime:encadrement.m}}). The variety we consider is the one introduced in \thrm{\ref{result:nb.points.hyper.var}} \pg{\pageref{result:nb.points.hyper.var}} with $l = m$, $k = 2m - m' - 2$ and
\begin{align*}
& \alpha_1 = v_1{,} \quad \dots{,} \quad \alpha_m = v_m; \\
& \alpha_{m+1} = v_{m'+1}-w_{m'+1}{,} \quad \dots{,} \quad \alpha_{2m-m'-2} = v_{m-2}-w_{m-2}; \\
& \beta_1 = w_1-v_1{,} \quad \dots{,} \quad \beta_{m-1} = w_{m-1}-v_{m-1}{,} \quad \beta_{m} = v_{m-1}-w_{m-1}.
\end{align*}
According to the pairing assumption on the $v_i$ and $w_i$ and to \lmm{\ref{resultat:sum.v=sum.w}}, we have
\[v_{m'+1} + \dots + v_m = w_{m'+1} + \dots + w_m \quad\text{in $\ZnZ$,}\]
and thus, $\alpha_m+\alpha_{m+1}+\dots+\alpha_{2m-m'-2}+\beta_m \equiv w_m \mod n$. Moreover,
\begin{align*}
&\alpha_1+\beta_1 \equiv w_1 \mod{n}{,} \quad \dots{,} \quad \alpha_{m-1}+\beta_{m-1} \equiv w_{m-1} \mod{n}; \\
&\beta_1+\beta_2 \equiv 0 \mod{n}{,} \quad \dots{,} \quad \beta_{m'-1}+\beta_{m'} \equiv 0 \mod{n}; \\
&\alpha_{m+1}+\beta_{m'+1} \equiv 0 \mod{n}{,} \quad \dots{,} \quad \alpha_{2m-m'-2}+\beta_{m-2} \equiv 0 \mod{n}; \\ &\beta_{m-1}+\beta_{m} \equiv 0 \mod{n}.
\end{align*}
The last three lines show that we have complete pairing (in the sense of \crllr{\ref{result:nb.zeros:hypergeom:reorganise}} \pg{\pageref{result:nb.zeros:hypergeom:reorganise}}) of the sequence $(\beta_1,\dots,\beta_{m},\allowbreak\alpha_{m+1},\dots,\alpha_{2m-m'-2})$; these elements are $\not\equiv 0 \mod n$ as $v_i \not\equiv w_i \mod n$, and so
\[N_{1/\parameter^n,\carmult,\carmultbis} = q^{\frac{2m-m'-2}{2}-\nu} \frac{\gausssum{\carmult^{v_1}\carmultbis} \dots \gausssum{\carmult^{v_m}\carmultbis}}{\gausssum{\carmult^{w_1}\carmultbis} \dots \gausssum{\carmult^{w_m}\carmultbis}}.\]
Hence, by comparing with \frml{\eqref{formule:beta:w}} \pg{\pageref{formule:beta:w}},
\[\beta_{(s_1,\dots,s_n),\carmult,\carmultbis} = q^{\frac{n+1}{2}-\frac{2m-m'}{2}} N_{1/\parameter^n,\carmult,\carmultbis}.\]
Multiplying this equality by $\frac{1}{q-1} \carmultbis(\tfrac{1}{\parameter^n})$ and summing over $\carmultbis \in \Fqnonzerocar$, we get
\[N_{\classSn{s},\carmult} = q^{\frac{n+1}{2}-\frac{2m-m'}{2}} N_{1/\parameter^n,\carmult}.\]
We now sum over $k \in \integerinterval{1}{n-1}$ the preceding formula where $\carmult$ is remplaced by $\carmult^k$. Noting that $N_{\classSn{s},\carmult^k} = N_{\classSn{ks},\carmult}$ (see \frml{\eqref{formula:beta.ZnZinv}} \pg{\pageref{formula:beta.ZnZinv}}), we obtain
\[\sum_{k=1}^{n-1}{N_{\classSn{ks},\carmult}} = q^{\frac{n+1}{2}-\frac{2m-m'}{2}} N_{1/\parameter^n}.\]
The left hand side is equal to $\multZnZinv{s} \sum_{\classSn{s'} \in \classSnZnZinv{s}}{N_{\classSn{s'},\carmult}}$ i.e. to $\frac{\multZnZinv{s}}{\multSn{s}} N_{\classSnZnZinv{s}}$. As $\classZnZ{s} \neq \classZnZ{0}$, \lmm{\ref{result:K.divides.gamma}} \pg{\pageref{result:K.divides.gamma}} shows that $\multSn{s}/\multZnZinv{s}$ is an integer. The result is hence proved.
\end{proof}

\begin{remark}
When $m' = m-2$, we have $v_{m-1}-w_{m-1} = w_m-v_m$ by \lmm{\ref{resultat:sum.v=sum.w}} \pg{\pageref{resultat:sum.v=sum.w}} and the equation of the variety simplifies greatly:
\[ H_{1/\parameter^n} \colon \begin{system}
y^n = x_1^{v_1} \dots x_{m}^{v_m} (1-x_1)^{w_1-v_1} \dots (1-x_m)^{w_m-v_m} \\
x_1 \dots x_m = \parameter^n 
\end{system}\]
\end{remark}

\subsection{Conclusion}\label{subsection:nb.points.link:conclusion}

We are now capable of showing \frml{\eqref{formula:AIM}} of the introduction. We begin by a result giving a lower bound on the number of pairings which will enable us to show that the dimension of the hypergeometric varieties is always $\leq n-4$.

\begin{proposition}\label{result:nb.pairings}
Let $\classSnZnZinv{s}$ be distinct from the class of $(0,1,\dots,n-1)$ and of $(0,\dots,0)$ and let $s$ be a representative of $\classSnZnZinv{s}$. We can choose two sequences $(v_1,\dots,v_m)$ and $(w_1,\dots,w_m)$ satisfying the assumptions of \rmrk{\ref{definition:v.w}} \pg{\pageref{definition:v.w}} such that we have the pairing
\[\forall i \in \integerinterval{1}{\tfrac{2m-n+1}{2}}, \quad w_{2i-1} - v_{2i-1} \equiv -(w_{2i} - v_{2i}) \mod n.\]
\end{proposition}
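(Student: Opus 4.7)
The plan is to translate the pairing condition into an additive-combinatorics question, solve the extremal case by an exact count, and propagate the bound to the general case by iteration.

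\textbf{Reformulation.} Adding $v_{2i-1} + v_{2i}$ to both sides, the condition $w_{2i-1} - v_{2i-1} \equiv -(w_{2i}-v_{2i}) \mod n$ is equivalent to $v_{2i-1} + v_{2i} \equiv w_{2i-1} + w_{2i} \mod n$. Hence I must produce enumerations of $V$ and of $W$ (with multiplicity) whose first $(2m-n+1)/2$ successive pair-sums agree. The statement is vacuous when $m \leq (n-1)/2$, so I may assume $m \geq (n+1)/2$; Cauchy--Davenport applied to $V \subset \Z/n\Z$ then yields $V + V = \Z/n\Z$.

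\textbf{Extremal case.} The sharpest and most informative case is $W = \{a\}^{m}$ constant. Any matching then yields the same multiset of differences $\{a - v : v \in V\}$, and two of them sum to $0$ precisely when the corresponding $\{v, v'\} \subset V$ satisfies $v + v' = 2a$; thus the number of achievable pairings equals $|V \cap (2a - V)|/2$. From $|V \cup (2a - V)| \leq n$ I obtain $|V \cap (2a - V)| \geq 2m - n$. Since $a \notin V$ by the disjointness $V \cap W = \emptyset$, the involution $v \mapsto 2a - v$ has no fixed point on $V \cap (2a - V)$ (a fixed point would give $v = a \in V$), so this intersection has even cardinality; combined with the fact that $2m-n$ is odd (as $n$ is odd), this sharpens to $|V \cap (2a - V)| \geq 2m - n + 1$, yielding the required $(2m-n+1)/2$ pairings. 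The bound is already tight here.

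\textbf{General $W$.} For a general multiset $W = \bigsqcup_{a}\{a\}^{k(a)-1}$, I proceed by iteration: at each stage extract a \emph{good quadruple} $(v, v', w, w')$ with $v \neq v'$ in the current $V'$ and $(w,w')$ a pair of distinct multiset positions in the current $W'$ satisfying $v + v' \equiv w + w' \mod n$, and recurse on $V'$ and $W'$. By \lmm{\ref{resultat:sum.v=sum.w}} the sum constraint $\sum V' \equiv \sum W' \mod n$ survives each step. Either $W'$ still contains some $a$ of multiplicity $\geq 2$, in which case the extremal analysis applied to $(V', a)$ supplies the next quadruple, or $W'$ is a set of distinct elements, and then any pair $(w, w') \subset W'$ admits a matching sum $v + v' = w + w'$ in $V'$ by Cauchy--Davenport combined with disjointness. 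The iteration continues while $|V'| \geq (n+1)/2$, which is exactly what is needed to reach $(2m-n+1)/2$ quadruples.

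\textbf{Main obstacle.} The crucial step is the parity upgrade in the extremal case: the bare Cauchy--Davenport estimate $|V \cap (2a - V)| \geq 2m - n$ is off by a factor of two from what is required, and only the fact that this intersection is even---forced by $a \notin V$ and the oddness of $n$---closes the gap. Propagating this parity refinement through every stage of the iteration for general $W$, while maintaining the hypothesis $|V'| \geq (n+1)/2$ long enough to reach the required count, is the main technical burden.
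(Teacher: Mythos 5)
Your reformulation of the pairing condition as $v_{2i-1}+v_{2i}\equiv w_{2i-1}+w_{2i} \bmod n$ is correct, and your treatment of the extremal case $W=\set{a}^m$ is sound (the parity upgrade from $2m-n$ to $2m-n+1$ via $a\notin V$ and $n$ odd is exactly the right observation; one can also get it directly from $a\notin V\cup(2a-V)$). But the reduction of the general case to an iteration has a genuine quantitative gap that cannot be closed as stated. Each extraction of a good quadruple removes \emph{two} elements from $V'$ while producing only \emph{one} pairing, and both of your branches require $\card{V'}\geq\frac{n+1}{2}$ to fire (the first so that $\card{V'\cap(2a-V')}\geq 2\card{V'}-n+1\geq 2$, the second for Cauchy--Davenport). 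Starting from $\card{V}=m$, the hypothesis $\card{V'}\geq\frac{n+1}{2}$ survives only about $\frac{2m-n+3}{4}$ rounds, roughly half of the required $\frac{2m-n+1}{2}$. Concretely, for $n=11$ and $m=8$ you need $3$ pairings but your iteration stalls after $2$, when $\card{V'}=4<6$. Your claim that maintaining $\card{V'}\geq\frac{n+1}{2}$ ``is exactly what is needed'' is arithmetically false except when $m\leq\frac{n+1}{2}$, i.e.\ when at most one pairing is required. A secondary flaw: in the second branch, $V'+V'=\ZnZ$ only gives $w+w'=v+v'$ with possibly $v=v'$; disjointness of $V$ and $W$ does not exclude $w+w'=2v$ for some $v\in V'$, so you would need a restricted-sumset (Erd\H{o}s--Heilbronn type) bound, which costs yet more on $\card{V'}$.

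The paper avoids iteration entirely by a global argument: by Alon's Additive Latin Transversals theorem (\thrm{1.2} of \cite{Alon.additive.latin.transversals}), one can permute $(w_1,\dots,w_m)$ so that the differences $v_i-w_i$ are \emph{pairwise distinct}; they then form a genuine $m$-element subset $V$ of $\ZnZ\setminus\set{0}$, and the count $2\mu=\card{V\cap(-V)}=2m-\card{V\cup(-V)}\geq 2m-(n-1)$ produces all $\frac{2m-n+1}{2}$ opposite pairs at once. If you want to salvage your approach, you would need a mechanism that extracts many pairings simultaneously (as your extremal case does) rather than one per round; the natural such mechanism is precisely the Alon transversal step, which your sketch is missing.
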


\begin{proof}
Let $(v_1,\dots,v_m)$ and $(w_1,\dots,w_m)$ be sequences as in \rmrk{\ref{definition:v.w}}. By \thrm{1.2} of \cite[\pg{126}]{Alon.additive.latin.transversals}, it is possible to permute $(w_1,\dots,w_m)$ so that the $v_i-w_i$ are pairwise distinct. Define $V$ as the subset $\set{v_i-w_i}$ of $\ZnZ$ (it has $m$ elements) and $\mu$ as the number of opposite pairs contained in $V$; we have
\[2\mu = \card{V \cap (-V)} = 2m - \card{V \cup (-V)} \geq 2m - (n-1).\]
As $2\mu$ is the maximal number of pairings, this ends the proof.
\end{proof}

\begin{theorem}\label{result:conclusion}
If $\parameter^n \neq 1$, we can write
\begin{align*}
\card{\dworkhypersurface(\Fq)} & = 1 + q + \dots + q^{n-2} + N_\textup{mirror} \\
& \quad + q^{\frac{n-3}{2}}N_1 + q^{\frac{n-5}{2}}N_3 + \dots + qN_{n-4},
\end{align*}
where each $N_d$ is a sum of terms of the form $\card{H_{\lambda}(\Fq)} - (q-1)^{l-1}q^{d+1-l}$ where $\alpha_i$ and $\beta_j$ are obtained from each $\classSnZnZinv{s}$ as described in \subsctn{\ref{subsection:nb.points.link:link}} and where each $H_{\lambda} \subset \varaff{d+2}$ is a variety of hypergeometric type of odd dimension equal to $d$ with $1 \leq d \leq n-4$ (here, $\lambda = 1/\parameter^n$) as considered in \subsctn{\ref{subsection:nb.points.hypergeom:formulas}}.
\end{theorem}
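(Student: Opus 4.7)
The plan is to assemble the theorem by combining the already-proved decomposition of $|X_\psi(\Fq)|$ with the identifications of the individual summands, and then to book-keep the powers of $q$ so that they match the statement.

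First I would start from \thrm{\ref{result:nb.zeros:psi<>0.organise}}, which gives
\[
\card{\dworkhypersurface(\Fq)} = 1 + q + \dots + q^{n-2} + \sum_{\classSnZnZinv{s}} N_{\classSnZnZinv{s}}.
\]
I then isolate two special classes. By \thrm{\ref{result:nb.zeros.mirror.factor}} (Wan), $N_{\classSnZnZinv{0}} = N_\textup{mirror}$. By \lmm{\ref{resultat:nb.zeros.terme.sing}}, the assumption $\parameter^n \neq 1$ forces $N_{\classSn{(0,1,\dots,n-1)},\carmult} = 0$, hence $N_{\classSnZnZinv{(0,1,\dots,n-1)}} = 0$. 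Thus the remaining contribution comes from the classes $\classSnZnZinv{s}$ with $\classSnZnZinv{s} \neq \classSnZnZinv{0}$ and $\classSnZnZinv{s} \neq \classSnZnZinv{(0,1,\dots,n-1)}$.

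Next, for each such $\classSnZnZinv{s}$, I pick sequences $(v_1,\dots,v_m)$ and $(w_1,\dots,w_m)$ as in \rmrk{\ref{definition:v.w}} and, by \prpstn{\ref{result:nb.pairings}}, arrange the pairing so that the number of matched pairs satisfies $m'/2 \geq m - (n-1)/2$; if this bound exceeds $(m-2)/2$, I keep only $(m-2)/2$ pairs (or $(m-3)/2$ when $m$ is odd), so that $m'$ remains even and $m' \leq m-2$. Both bounds can be achieved simultaneously because \rmrk{\ref{remarque:propr.nprime:encadrement.m}} forces $2 \leq m \leq n-3$. \thrm{\ref{result:link.nb.points}} then yields
\[
N_{\classSnZnZinv{s}} = \frac{\multSn{s}}{\multZnZinv{s}}\, q^{\frac{n+1-(2m-m')}{2}}\, N_{1/\parameter^n},
\]
where $N_{1/\parameter^n}$ is attached to a hypergeometric variety $H_{1/\parameter^n}$ of dimension $d := 2m - m' - 3$, and $\multSn{s}/\multZnZinv{s} \in \N$ by \lmm{\ref{result:K.divides.gamma}}. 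Setting $d = 2m - m' - 3$, the power of $q$ rewrites as $q^{(n-d-2)/2}$.

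The dimension $d$ is odd since $m'$ is even, so $2m - m'$ is even and $d = 2m-m'-3$ is odd. The bound $m'\leq m-2$ gives $d \geq m-1 \geq 1$, while the bound $m' \geq 2m - (n-1)$ gives $d \leq n-4$. Consequently $d \in \{1,3,\dots,n-4\}$, matching the set of admissible dimensions in the statement. Recall further from \thrm{\ref{result:nb.points.hyper.var}} that $N_{1/\parameter^n}$ is, by definition, a sum over the non-trivial characters $\carmult$ with $\carmult^n = \carmulttriv$ of the terms $N_{1/\parameter^n,\carmult}$; unfolding these gives a contribution of the form $\card{H_{1/\parameter^n}(\Fq)} - (q-1)^{l-1}q^{k-l}$, and since $l = m$, $k = 2m-m'-2$, we have $k - l = d + 1 - l$, in agreement with the statement.

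Finally, I gather, for each odd $d$ with $1 \leq d \leq n-4$, all the contributions $\frac{\multSn{s}}{\multZnZinv{s}}(\card{H_\lambda(\Fq)} - (q-1)^{l-1}q^{d+1-l})$ coming from classes $\classSnZnZinv{s}$ whose associated hypergeometric variety has dimension $d$; their sum is by definition $N_d$, and the corresponding contribution to $|X_\psi(\Fq)|$ is $q^{(n-d-2)/2} N_d$. Adding up over $d = 1, 3, \dots, n-4$ and combining with $1+q+\dots+q^{n-2} + N_\textup{mirror}$ yields the claimed formula. The only non-routine step is the simultaneous verification of the parity and range of $d$; the rest is mere bookkeeping once \thrm{\ref{result:link.nb.points}} and \prpstn{\ref{result:nb.pairings}} are in place.
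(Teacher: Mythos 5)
Your proposal is correct and follows essentially the same route as the paper's proof: isolate the mirror class via Wan's theorem, kill the $(0,1,\dots,n-1)$ class via \lmm{\ref{resultat:nb.zeros.terme.sing}}, apply \thrm{\ref{result:link.nb.points}} with a pairing guaranteed by \prpstn{\ref{result:nb.pairings}}, and check that $d=2m-m'-3$ is odd and lies in $\integerinterval{1}{n-4}$ using $2\le m\le n-3$. The only item the paper includes that you omit is the verification that every odd $d$ in that range is actually realised by some class $\classSnZnZinv{s}$, which is not needed for the statement as written.
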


\begin{proof}
We saw in \thrm{\ref{result:nb.zeros:psi<>0.organise}} \pg{\pageref{result:nb.zeros:psi<>0.organise}} that, if $\parameter \neq 0$ and $q \equiv 1 \mod n$, we could write
\[\card{\dworkhypersurface(\Fq)} = 1 + q + \dots + q^{n-2} + N_{\classSnZnZinv{0}} + \sum_{\classSnZnZinv{s} \neq \classSnZnZinv{0}}{N_{\classSnZnZinv{s}}}.\]
In \thrm{\ref{result:nb.zeros.mirror.factor}}, we recalled Wan's result showing that $N_{\classSnZnZinv{0}} = N_\mathrm{mirror}$ and in \lmm{\ref{resultat:nb.zeros.terme.sing}}, we showed that the term corresponding to $(0,1,2,\dots,n-1)$ was zero when $\parameter^n \neq 1$.

Let us now consider $\classSnZnZinv{s}$ distinct from the class of $(0,\dots,0)$ and of $(0,1,2,\allowbreak\dots,\allowbreak n-1)$. Let $m'$ be the greatest even integer $\leq m-2$ such that there exists two sequences $(v_1,\dots,v_m)$ and $(w_1,\dots,w_m)$ as in \rmrk{\ref{definition:v.w}} verifying
\[\forall i \in \integerinterval{1}{\tfrac{m'}{2}}, \quad w_{2i-1} - v_{2i-1} \equiv -(w_{2i} - v_{2i}) \mod n.\]
By \prpstn{\ref{result:nb.pairings}}, we have $m' \geq 2m-n+1$ (note that, by \rmrk{\ref{remarque:propr.nprime:encadrement.m}} \pg{\pageref{remarque:propr.nprime:encadrement.m}}, $m+3 \leq  n$, hence $m-2 \geq 2m - n + 1$). The dimension $d = 2m-m'-3$ of the corresponding variety of hypergeometric type considered in \thrm{\ref{result:link.nb.points}} \pg{\pageref{result:link.nb.points}} thus satisfies $1 \leq d \leq n-4$.

Moreover, we have $q^{\frac{n+1}{2}-\frac{2m-m'}{2}} = q^{\frac{n-d-2}{2}}$, and so, as $d$ varies between $1$ and $n-4$, these powers of $q$ take the values $q^{\frac{n-3}{2}}$, \dots, $q$ respectively and all these values are obtained; indeed, if we consider an integer $m$ such that $2 \leq m = d+1 \leq n-3$ and define $s = (0,\dots,0,1,n-1,2,n-2,\dots,\frac{n-m-1}{2},\allowbreak n-\frac{n-m-1}{2})$, then $w = (0,\dots,0)$ and $v = (\frac{n-m+1}{2},n-\frac{n-m+1}{2},\dots,\frac{n-1}{2},\frac{n+1}{2})$ each consist of $m$ elements and we have $m' = m-2$ with the notations of \thrm{\ref{result:link.nb.points}}.
\end{proof}

\section{Examples}\label{section:nb.points.examples}

To illustrate the methods we have just presented in this paper, let's detail explicitly the cases $n=5$ and $n=7$; these examples are given in terms of the hypersurfaces of hypergeometric type from \subsctn{\ref{subsection:nb.points.hypergeom:hypersurface.hypergeom}}.

\begin{example}[$n=5$]\label{example:n=5} Let's recover the results announced by Candelas, de la Ossa and Rodriguez-Villegas in \cite{CdlORV.II} in the non-singular and non-diagonal case (see \cite{Goutet.quintique} for a complete treatment of the $n=5$ case). We are interested in the factorisation of the zeta function of the quintic $\quintichypersurface \colon x_1^5 + \dots + x_5^5 - 5 \parameter x_1 \dots x_5 = 0$ when $\parameter \neq 0$ and $\parameter^5 \neq 1$. We list all the classes $\classSnZnZinv{(s_1,\dots,s_5)} \neq \classSnZnZinv{(0,0,0,0,0)}$ and $\neq \classSnZnZinv{(0,1,2,3,4)}$ (following the notations from \subsctns{\ref{subsection:nb.points.dwork:prelim}, \ref{subsection:nb.points.link:K|gamma}, \ref{subsection:nb.points.link:prelim} and \ref{subsection:nb.points.link:link}}):
\begin{center}\begin{tabular}{cccccc}
$\classSnZnZinv{s}$ & $\multSn{s}$ & $\multZnZinv{s}$ & $m$ & $m'$ & $d$ \\
\noalign{\vspace{1pt}}
\hline
\noalign{\vspace{2pt}}
$(0,0,0,1,4)$ &   20 & 2 & 2 & 0 & 1 \\
$(0,0,1,1,3)$ &   30 & 2 & 2 & 0 & 1 \\
\end{tabular}\end{center}
Using the method described above, we obtain the following table (the hypergeometric hypersurfaces are all of the form $y^5 = x^{v_1}(1-x)^{v_2}(1-\frac{1}{\parameter^5}x)^{5-v_2}$).
\begin{center}\begin{tabular}{cccccl}
$\classSnZnZinv{s}$ & $v_1$ & $v_2$ & $w_1$ & $w_2$ & \textsc{equation}\\
\noalign{\vspace{1pt}}
\hline
\noalign{\vspace{2pt}}
$(0,0,0,1,4)$ & 2 & 3 & 0 & 0 & $y^5 = x^2(1-x)^3(1-\tfrac{1}{\parameter^5}x)^2$\\
$(0,0,1,1,3)$ & 2 & 4 & 0 & 1 & $y^5 = x^2(1-x)^4(1-\tfrac{1}{\parameter^5}x)$\\
\end{tabular}\end{center}
We find the same equations as those given in \cite[\sctn{11.1}]{CdlORV.II}:
\[\curveA \colon y^5 = x^2(1-x)^3(1-\tfrac{1}{\parameter^5}x)^2 \quad \text{and} \quad \curveB \colon y^5 = x^2(1-x)^4(1-\tfrac{1}{\parameter^5}x).\]
We set $N_{\curveA} = \card{\curveA(\Fq)}-q$ and $N_{\curveB} = \card{\curveB(\Fq)}-q$ (these number of points are affine). We have, when $\psi \neq 0$, $\psi^5 \neq 1$ and $q \equiv 1 \mod 5$:
\[\card{\quintichypersurface(\Fq)} = 1+q+q^2+q^3 + N_\textup{mirror} + 10qN_{\curveA} + 15qN_{\curveB}.\]
\end{example}

\begin{example}[$n=7$]\label{example:n=7}
We use the preceding results to find the factorisation of the zeta function of the septic $S_\parameter \colon x_1^7 + \dots + x_7^7 - 7\parameter x_1 \dots x_7 = 0$. We list the $\classSnZnZinv{(s_1,\dots,s_7)} \neq \classSnZnZinv{(0,\dots,0)}$ and $\neq \classSnZnZinv{(0,1,2,3,4,5,6)}$ (following the notations from \subsctns{\ref{subsection:nb.points.dwork:prelim}, \ref{subsection:nb.points.link:K|gamma}, \ref{subsection:nb.points.link:prelim} and \ref{subsection:nb.points.link:link}}):
\begin{center}\begin{tabular}{ccccccc}
$\classSnZnZinv{s}$ & $\multSn{s}$ & $\multZnZinv{s}$ & $m$ & $m'$ & $d$ \\
\noalign{\vspace{1pt}}
\hline
\noalign{\vspace{2pt}}
$(0,0,0,1,2,5,6)$ &  840 & 2 & 2 & 0 & 1 \\
$(0,0,1,1,3,4,5)$ & 1260 & 2 & 2 & 0 & 1 \\
$(0,0,1,1,2,4,6)$ & 1260 & 2 & 2 & 0 & 1 \\
\noalign{\vspace{2pt}}
\hline
\noalign{\vspace{2pt}}
$(0,0,0,0,1,2,4)$ &  210 & 3 & 3 & 0 & 3 \\
$(0,0,0,1,1,2,3)$ &  420 & 1 & 3 & 0 & 3 \\
$(0,0,1,1,3,3,6)$ &  630 & 3 & 3 & 0 & 3 \\
\noalign{\vspace{2pt}}
\hline
\noalign{\vspace{2pt}}
$(0,0,0,0,0,1,6)$ &   42 & 2 & 4 & 2 & 3 \\
$(0,0,0,0,1,1,5)$ &  105 & 1 & 4 & 2 & 3 \\
$(0,0,0,1,1,1,4)$ &  140 & 2 & 4 & 2 & 3 \\
$(0,0,0,1,1,6,6)$ &  210 & 2 & 4 & 2 & 3 \\
\end{tabular}\end{center}
The result is that, when $\parameter \neq 0$, $\parameter^7 \neq 1$ and $q \equiv 1 \mod 7$, the number of points takes the form
\[\card{S_\parameter(\Fq)} = 1+q+q^2+q^3+q^4+q^5+N_\textup{mirror} + q^2N_{1} + qN_3,\]
where the terms corresponding to curves of $\varaff{2}$ can be written as
\[N_{1} = 420N_{c_1} + 630N_{c_2} + 630N_{c_3},\]
and those corresponding to threefold hypersurfaces of $\varaff{4}$ can be written as
\[N_{3} = 70N_{t_1} + 420N_{t_2} + 210N_{t_3} + 21N_{t'_1} + 105N_{t'_2} + 70N_{t'_3} + 
105N_{t'_4},\]
where the various terms are defined in the following table (the corresponding number of points are in the affine space).

\begin{center}\begin{tabular}{lc}
\multicolumn{1}{l}{\textsc{equation of the hypersurface}} & \textsc{nb. of pts.} \\
\hline
\noalign{\vspace{2pt}}
$y^7 = x^3(1-x)^4 (1-\frac{1}{\parameter^7}x)^3$ & $q+N_{c_1}$\\
$y^7 = x^2 (1-x)^6 (1-\tfrac{1}{\psi^7} x)$ & $q+N_{c_2}$\\
$y^7 = x^3 (1-x)^5 (1-\tfrac{1}{\parameter^7} x)^2$ & $q+N_{c_3}$ \\
%
\noalign{\vspace{2pt}}
\hline
\noalign{\vspace{2pt}}
$y^7 = x_1^3 x_2^5 x_3^3 (1-x_1)^4 (1-x_2-x_3)^6 (1-\tfrac{1}{\parameter^7} x_1 x_2)$ & $q^3+N_{t_1}$\\
%
%
$y^7 = x_1^4 x_2^5 x_3^4 (1-x_1)^3 (1-x_2-x_3)^6 (1-\tfrac{1}{\parameter^7} x_1 x_2)$ & $q^3+N_{t_2}$\\
%
%
$y^7 = x_1^2 x_2^4 x_3^4 (1-x_1)^6 (1-x_2-x_3)^5 (1-\tfrac{1}{\parameter^7} x_1 x_2)^2$ & $q^3+N_{t_3}$\\
%
%
\noalign{\vspace{2pt}}
\hline
\noalign{\vspace{2pt}}
$y^7 = x_1^2 x_2^5 x_3^3 (1-x_1)^5 (1-x_2)^2(1-x_3)^4 (1-\tfrac{1}{\parameter^7}x_1x_2x_3)^3$ & $q^3+N_{t'_1}$\\
$y^7 = x_1^3 x_2^3 x_3^2 (1-x_1)^4 (1-x_2)^4 (1-x_3)^6 (1-\tfrac{1}{\parameter^7}x_1x_2x_3)$ & $q^3+N_{t'_2}$\\
$y^7 = x_1^3 x_2^5 x_3^2 (1-x_1)^4 (1-x_2)^3 (1-x_3)^6 (1-\tfrac{1}{\parameter^7}x_1x_2x_3)$ & $q^3+N_{t'_3}$\\
$y^7 = x_1^3 x_2^5 x_3^2 (1-x_1)^4 (1-x_2)^3 (1-x_3)^4 (1-\tfrac{1}{\parameter^7}x_1x_2x_3)^3$ & $q^3+N_{t'_4}$
\end{tabular}\end{center}

Let's justify for example the equation corresponding to $[0,0,0,0,0,1,6]$. We have $\{v_1,v_2,v_3,v_4\} = \{2,3,4,5\}$ and $w_1 = w_2 = w_3 = w_4 = 0$. Let's take, for example, $v_1 = 2$, $v_2 = 5$, $v_3 = 3$ and $v_4 = 4$ so that $w_1 - v_1 = -(w_2 - v_2)$ and $w_3 - v_3 = -(w_4 - v_4)$. For this choice, we have $m = 4$, $m' = m-2 = 2$ and the equation we obtain is
\[y^7 = x_1^2 x_2^5 x_3^3 (1-x_1)^5 (1-x_2)^2 (1-x_3)^4 (1-\tfrac{1}{\parameter^7}x_1x_2x_3)^3.\]
This is the equation corresponding to $N_{t'_1}$. The other equations follow in a similar way.
\end{example}

\begin{remark}
Using the same method, we could treat the cases $n = 11$, $n = 13$, etc. The only difficulty is practical, as the number of classes $\classSnZnZinv{(s_1,\dots,s_{n})}$ grows quickly with $n$.
\end{remark}

\section*{Acknowledgments}

I would like to thank my thesis advisor, J.~Oesterl\'e, for the numerous improvements he suggested concerning the text. I would also like to thank Surya Ramana for providing a reference concerning \prpstn{\ref{result:nb.pairings}}.


\end{document}